\newtheorem {theorem}{Theorem}[section]
\newtheorem {cory}[theorem]{Corollary}
\newtheorem {lemma}[theorem]{Lemma}
\theoremstyle{definition}
\newtheorem{definition}{Definition}
\theoremstyle{remark}
\newtheorem{rem}{Remark}
\numberwithin{equation}{section}
\newcommand{\al}{\alpha}
\newcommand{\de}{\delta}
\newcommand{\la}{\lambda}
\newcommand{\za}{\zeta}
\newcommand{\ff}{w}
\newcommand{\FF}{\Phi}
\newcommand{\hg}{h}
\newcommand{\GG}{g}
\newcommand{\dd}{\beta}
\newcommand{\ee}{n}
\newcommand{\we}{w}
\newcommand{\tve}{\widetilde{v}}
\newcommand{\ve}{v}
\newcommand{\xx}{x^\prime}
\newcommand{\ph}{\varphi}
\newcommand{\cph}{C_\varphi}
\newcommand{\vph}{V_\varphi}
\newcommand{\hol}{\mathcal{H}ol}
\newcommand{\berg}{A}
\newcommand{\rom}{r_{\mathcal D}}
\newcommand{\dom}{\mathcal D}
\newcommand{\Dbb}{\mathbb D}
\newcommand{\YY}{\mathcal Y}
\newcommand{\spn}{\partial B_d}
\newcommand{\bd}{B_d}
\newcommand{\cd}{{\mathbb{C}}^d}
\newcommand{\Rbb}{\mathbb R}
\newcommand{\Cbb}{\mathbb C}
\newcommand{\Nbb}{\mathbb N}
\newcommand{\rad}{\mathcal R}
\newcommand{\grw}{\mathcal{A}^{\we}}
\newcommand{\jg}{J_g}
\begin{document}

\title[Moduli of holomorphic functions and log-convex radial weights]{Moduli of holomorphic functions and logarithmically  convex radial weights}

\author[E.~Abakumov]{Evgeny Abakumov}

\address{Universit\'e Paris-Est, LAMA (UMR 8050), UPEMLV, UPEC, CNRS,
F-77454, Marne-la-Vall\'ee, France}

\email{evgueni.abakoumov@u-pem.fr}

\author[E.~Doubtsov]{Evgueni Doubtsov}

\address{St.~Petersburg Department
of V.A.~Steklov Mathematical Institute, Fontanka 27, St.~Petersburg
191023, Russia;
\newline
\phantom{\quad} Department of Mathematics and Mechanics, St.~Petersburg State University,
Universitetski pr. 28, St.~Petersburg 198504, Russia}

\email{dubtsov@pdmi.ras.ru}

\thanks{The second author was supported by the Russian Science Foundation (grant No.~14-41-00010).}

\subjclass[2010]{Primary 30H99; Secondary 32A37, 42A55, 47B38}

\date{}

\keywords{Growth space, log-convex radial weight, circular domain}

\begin{abstract}
Let $\hol(\Dbb)$ denote the space of holomorphic functions
on the unit disk $\Dbb$.
We characterize those radial weights $\we$ on $\Dbb$
for which there exist functions $f, g\in\hol(\Dbb)$
such that the sum $|f| + |g|$ is equivalent to $\we$.
Also, we obtain similar results in several complex variables for
circular, strictly convex domains with smooth boundary.
\end{abstract}

\maketitle

\section{Introduction}\label{s_int}

\subsection{Growth spaces}\label{ss_growth_df}
Let $\hol(\Dbb)$ denote the space of holomorphic functions
on the unit disk $\Dbb$. Consider a weight function $\we$,
that is, a non-decreasing, continuous, unbounded function
$\we: [0, 1)\to (0, +\infty)$.
We extend $\we$ to a radial weight on $\Dbb$ setting
$\we(z) = \we(|z|)$, $z\in\Dbb$.
By definition,
the growth space $\grw(\Dbb)$ consists of $f\in\hol(\Dbb)$
such that
\begin{equation}\label{e_phi}
|f(z)|\le C\we(z), \quad z\in \Dbb,
\end{equation}
for some constant $C>0$.

Given functions $u, v: \Dbb\to (0,+\infty)$, we write $u\asymp v$
and we say that $u$ and $v$ are equivalent if
\[
C_1 u(z) \le v(z) \le C_2 u(z), \quad z\in\Dbb,
\]
for some constants $C_1, C_2 >0$.
The definition of equivalent weight functions is analogous.
If $\we_1$ and $\we_2$ are equivalent radial weights,
then the identities
\[
\|f\|_{\mathcal{A}^{\we_j}(\Dbb)} = \sup_{z\in\Dbb} \frac{|f(z)|}{\we_j(z)},
\quad j=1,2,
\]
define equivalent norms on the Banach space
$\mathcal{A}^{\we_1}(\Dbb) = \mathcal{A}^{\we_2}(\Dbb)$.

\subsection{Motivations}\label{ss_mot}
In various applications, it is useful to have test functions
$f\in\grw(\Dbb)$ for which the reverse of estimate (\ref{e_phi}) holds,
in a sense.
The starting point for the present paper is the following reverse estimate obtained by
W.~Ramey and D.~Ullrich~\cite{RU91} with the help of lacunary series:

\begin{theorem}[cf.\ {\cite[Proposition~5.4]{RU91}}]\label{t_RU}
Let $\we(t)=\frac{1}{1-t}$, $0\le t< 1$.
There exist functions $f_1, f_2 \in \grw(\Dbb)$ such that
\[
|f_1(z)| + |f_2(z)| \ge \we(|z|)
\]
for all $z\in \Dbb$.
\end{theorem}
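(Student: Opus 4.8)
The plan is to construct $f_1$ and $f_2$ as lacunary power series with complementary phases. Take frequencies $n_k=2^k$ and set
\[
\Phi(z)=\sum_{k\ge 0}2^{k}z^{n_k},\qquad \Psi(z)=\sum_{k\ge 0}\lambda_k\,2^{k}z^{n_k},
\]
where $(\lambda_k)\subset\Cbb$ is a sequence of unimodular constants to be chosen. The upper bound is routine: writing $t=1-r$ and using $r^{2^k}\le e^{-t2^k}$, one gets
\[
\max_{|z|=r}|\Phi(z)|\le\sum_{k\ge0}2^{k}r^{2^k}\le\sum_{k\ge0}2^{k}e^{-t2^k}\asymp\frac1t=\we(r),
\]
and likewise for $\Psi$, so $\Phi,\Psi\in\grw(\Dbb)$. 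The whole difficulty lies in the reverse, \emph{pointwise} lower bound.

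For fixed $r=1-t$ near $1$, let $k^{*}=k^{*}(t)$ be the index maximizing $\phi(k)=k\log2-2^{k}t$, so that $2^{k^{*}}\asymp 1/t$ and the term of index $k^{*}$ has modulus $\asymp\we(r)$. A direct computation with $\phi$ shows that the terms decay geometrically as $|k-k^{*}|$ grows; hence for a large absolute constant $A$ the tail satisfies $\sum_{|k-k^{*}|>A}2^{k}r^{n_k}\le\varepsilon\,\we(r)$, and everything is governed by the window $|k-k^{*}|\le A$. Writing $z=re^{i\theta}$, the restriction of $\Phi$ to the circle is, up to the error $\varepsilon\we(r)$, the short exponential sum $\sum_{|j|\le A}2^{k^{*}+j}r^{n_{k^{*}+j}}e^{i2^{j}\psi}$ with $\psi=2^{k^{*}}\theta$. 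The obstruction --- and the reason one function cannot suffice --- is that for isolated angles these boundedly many comparable vectors may cancel, so that $|\Phi(re^{i\theta})|$ drops well below $\we(r)$.

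The key observation that makes two functions work is a self-similar reduction. With $\tau=t\,2^{k^{*}}$ one has $2^{k^{*}+j}r^{n_{k^{*}+j}}\asymp 2^{k^{*}}\,2^{j}e^{-\tau2^{j}}$, where $\tau$ ranges over a fixed compact interval bounded away from $0$ and $\infty$ as $r\to1$. Thus, up to the factor $2^{k^{*}}\asymp\we(r)$, both windowed sums depend only on the two parameters $(\tau,\psi)$ and on the fixed weight profile $(2^{j}e^{-\tau2^{j}})_{|j|\le A}$. The proof then reduces to the following lemma: the unimodular sequence $(\lambda_j)$ can be chosen so that, for all $\tau$ in the compact range and all $\psi$,
\[
\max\Bigl(\bigl|\textstyle\sum_{|j|\le A}2^{j}e^{-\tau2^{j}}e^{i2^{j}\psi}\bigr|,\ \bigl|\textstyle\sum_{|j|\le A}\lambda_j2^{j}e^{-\tau2^{j}}e^{i2^{j}\psi}\bigr|\Bigr)\ \ge\ c\sum_{|j|\le A}2^{j}e^{-\tau2^{j}}
\]
for some absolute $c>0$; that is, the two sparse sums never vanish simultaneously. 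Granting this, $|\Phi|+|\Psi|\ge(c-2\varepsilon)\we$ on a neighbourhood of the unit circle; adding a large constant to $\Phi$ secures the bound on the remaining compact set, and rescaling turns $\ge c'\we$ into $\ge\we$, since $\we$ and $c'\we$ are equivalent. Taking $f_1=\Phi$ and $f_2=\Psi$ finishes the proof.

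The main obstacle is precisely this non-simultaneous-vanishing lemma, which crystallizes the tension in the problem. The mild lacunarity $n_k=2^{k}$ is forced: a sparser sequence would let $\max_{|z|=r}|\Phi|$ dip between consecutive dominant frequencies, destroying the equivalence with $\frac1{1-r}$, yet this same choice leaves $O(1)$ competing terms that can interfere destructively. The point of the reduction is that the self-similarity of the frequencies $2^{k}$ collapses the uniform-in-$r$ requirement to a problem with parameters $(\tau,\psi)$ in a compact set, where I would argue by compactness together with a generic (or explicit, e.g.\ quadratic) choice such as $\lambda_j=e^{i\pi j^{2}}$ that forces the two zero sets of the short sums to be disjoint.
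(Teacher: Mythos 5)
Your reduction to the window $|j|\le A$ and the ``non-simultaneous-vanishing lemma'' is exactly where the whole proof lives, and it is not carried out; as written there is a genuine gap, with concrete obstacles to closing it. (a) The explicit choice you offer, $\lambda_j=e^{i\pi j^2}$, is just $(-1)^j$ (since $j^2\equiv j \pmod 2$), and with alternating signs the lemma is \emph{equivalent} to showing that the even-frequency and odd-frequency subsums cannot be simultaneously small: indeed $S_1\pm S_2$ are twice those subsums. For frequencies $2^j$ and coefficients $2^je^{-\tau 2^j}$, a ratio-$4$ subsequence has no dominant term (at the crossover $\tau 4^i=(\log 4)/3$ the two largest coefficients are equal and the remaining ones sum to more than either), so neither subsum is individually bounded below and you are back to a simultaneous-smallness question of exactly the original type; nothing has been gained, let alone proved. (b) The ``generic choice by compactness'' route is not routine either: for each fixed $(\tau,\psi)$ the bad set of phase vectors has measure zero, but you must avoid an \emph{uncountable} union of such sets, one for each point of the zero set of the first sum; this needs the zero set to be an at most one-dimensional real-analytic variety plus a dimension-counting (Sard-type) argument, none of which appears in the sketch. (c) Even granting the lemma, there is an unaddressed circularity of constants: the tail error is $\varepsilon(A)\asymp 2^{-A}$, while the compactness argument yields a constant $c(A)$ over which you have no control, and you need some $A$ with $\varepsilon(A)<c(A)$. (d) A smaller, fixable point: the reduction replaces the true coefficients $2^{k^*+j}r^{2^{k^*+j}}$ by $2^{k^*}2^je^{-\tau 2^j}$ only up to $\asymp$; lower bounds on moduli of sums do not survive replacing coefficients by merely comparable ones, so you need (and can get) the sharper statement that the ratio is $1+o(1)$ uniformly on the window as $r\to 1$.

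It is worth contrasting this with the paper's construction (Theorem~\ref{l_disk}, which covers $w(t)=1/(1-t)$ because this weight is log-convex). There, all cancellation issues are avoided by giving $f_1$ and $f_2$ \emph{disjoint, interleaved} sets of frequencies: tangent lines to the convex function $\Phi(x)=\log w(e^x)$ with vertical gap $h\ge 2$ produce radii $t_k$, coefficients $a_k$ and exponents $n_k$ such that on $[t_{k-1},t_k]$ the $k$-th term is within a fixed factor of $w$ while the terms with $|m-k|\ge 2$ sum to at most half of it (Lemma~\ref{l_real_segment}); putting odd-indexed terms into $f_1$ and even-indexed ones into $f_2$, the function owning the dominant $k$-th term contains no comparable neighbours, so the triangle inequality alone gives the pointwise lower bound, for every argument of $z$. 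This also shows that your claim that $n_k=2^k$ is ``forced'' is an artifact of your architecture, in which both functions carry all frequencies: once the frequencies are split, each function is allowed to dip between its own frequencies, because the other function is large precisely there, and the paper's frequencies are indeed sparser and chosen adaptively to the weight.
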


One may consider Theorem~\ref{t_RU} as a particular solution of the
following approximation problem:

Given a radial weight $\we$ on $\Dbb$, find
$f_1, f_2 \in\hol(\Dbb)$ such that
\[
|f_1| + |f_2| \asymp w.
\]
In other words, we are looking for a holomorphic mapping $f: \Dbb\to \Cbb^2$
such that $\|f\| \asymp w$.

The above problem has been solved recently for various explicit radial weights;
see, for example, \cite{Ga08}, \cite{GX02}, \cite{GPPR08}, \cite{KS09}, \cite{Xi04}.
Clearly, the required property still holds if $\we$
is replaced by an equivalent radial weight.
To the best of our knowledge, the largest class of weight functions $\we(t)$ is
considered in \cite{AD12}, where
the direct analog of Theorem~\ref{t_RU} is proved under assumption that $\we(t)$
has the following doubling property:
\begin{equation}\label{e_reg_df_we}
\we(1 - s/2) < A \we(1 - s),
\quad 0<s\le 1,
\end{equation}
for some constant $A >1$.
Essentially the same result was independently obtained in \cite{KP11}.
As in \cite{RU91}, the arguments in \cite{AD12}, \cite{KP11}
use lacunary series;
see \cite{GPR14} for a different proof.

Basically, property~\eqref{e_reg_df_we} means that
$\we(t)$ 
grows sufficiently slowly
as $t\to 1-$.
So, it is natural to ask whether a growth restriction is
crucial for the corresponding results.
In the present paper, we characterize those $\we$
for which the approximation problem under consideration is solvable.
In particular, analogs of Theorem~\ref{t_RU}
hold for rapidly growing radial weights.
However, there are 
natural restrictions on the regularity of $\we$.

\subsection{Main results}\label{ss_int_model}
Let $\we: [0, 1)\to (0, +\infty)$ be a weight function.
Often, $\we$ is called log-convex if
$\log w$ is convex.
In this paper, we use a different definition
related to Hadamard's three-circles theorem.
Namely, $\we$
is said to be {\it log-convex} if
$\log w(t)$ is a convex function of $\log t$, $0<t<1$.
If $\log w(t)$ is a convex function of $t$, then $\we$ is log-convex, but the converse is
not true; see Remark~\ref{r_logconv}.

\subsubsection{Unit disk}\label{sss_int_disk}
\begin{theorem}\label{l_disk}
Let $\we$ be a radial weight on $\Dbb$.
Then the following properties are equivalent:
\begin{align}
&\textrm{there exist\ }
f_1, f_2\in\hol(\Dbb)
\textrm{\ such that\ }
|f_1| + |f_2| \asymp \we;\label{e_phi_2disk}
\\
&\we(t)
\textrm{\ is equivalent to a log-convex weight function on\ }
[0, 1).\label{e_logc_disk}
\end{align}
\end{theorem}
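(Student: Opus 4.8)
The plan is to prove the two implications separately, with necessity following quickly from Hadamard's three-circles theorem and sufficiency requiring an explicit construction by lacunary series.

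\emph{Necessity} \eqref{e_phi_2disk} $\Rightarrow$ \eqref{e_logc_disk}. Suppose $f_1, f_2 \in \hol(\Dbb)$ satisfy $|f_1|+|f_2|\asymp\we$. For $f\not\equiv 0$ I put $M(r,f)=\max_{|z|=r}|f(z)|$; Hadamard's three-circles theorem says precisely that $\log M(r,f)$ is convex in $\log r$, that is, $M(\cdot,f)$ is log-convex in our sense. I would set $W(r)=\max\bigl(M(r,f_1),M(r,f_2)\bigr)$, which is log-convex because the pointwise maximum of convex functions is convex, and which is a genuine weight function (continuous, nondecreasing by the maximum principle, and unbounded since $\we$ is). Taking the maximum over $|z|=r$ of the two-sided estimate $|f_1|+|f_2|\asymp\we$, and using $W\le\max_{|z|=r}(|f_1|+|f_2|)\le M(r,f_1)+M(r,f_2)\le 2W$, yields $\we\asymp W$, which is \eqref{e_logc_disk}.

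\emph{Sufficiency} \eqref{e_logc_disk} $\Rightarrow$ \eqref{e_phi_2disk}. Replacing $\we$ by an equivalent weight, I may assume $\we$ is log-convex. The first step is to realize $\we$ by a lacunary series. Writing $\psi(x)=\log\we(e^x)$, $x<0$, the hypothesis makes $\psi$ convex and increasing with $\psi(x)\to+\infty$ as $x\to 0-$; representing $\psi$ as the upper envelope of its supporting lines and selecting supporting lines with integer slopes $n_1<n_2<\cdots$ that grow at least geometrically, I obtain a lacunary sequence $(n_k)$ and coefficients $a_k>0$ for which
\[
\sum_k a_k r^{n_k}\asymp\we(r),\qquad 0\le r<1 .
\]
Log-convexity is exactly what guarantees that the selected envelope stays within a bounded multiplicative factor of $\we$, and that the sum of the lacunary terms is comparable to its largest term. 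The upper bound is then immediate: any $f=\sum_k b_k z^{n_k}$ with $|b_k|=a_k$ obeys $|f(z)|\le\sum_k a_k r^{n_k}\lesssim\we(z)$, so $f\in\grw(\Dbb)$.

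For the lower bound a single lacunary series is insufficient because of its zeros, and here I would invoke the Ramey--Ullrich two-function device: choose $f_1=\sum_k a_k z^{n_k}$ and a companion $f_2$ obtained by inserting unimodular factors (equivalently, a rotation), arranged so that at every $z$ at least one of $|f_1(z)|,|f_2(z)|$ is comparable to the dominant term $a_{k(r)}r^{n_{k(r)}}\asymp\we(r)$, where $k(r)$ indexes the largest term on the circle $|z|=r$. On the annulus where the $k$th term dominates, $|f_j(z)|$ can be small only through cancellation against the neighbouring terms $n_{k\pm1}$, and the two phase choices can be made never to cancel simultaneously.

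The main obstacle is this last, lower-bound step, carried out \emph{without} the doubling hypothesis \eqref{e_reg_df_we}. When the ratios $n_{k+1}/n_k$ are bounded (the slowly growing, doubling regime of \cite{RU91, AD12}) the neighbouring terms are genuinely of the same order and the cancellation analysis is delicate; when $\we$ grows rapidly the gaps are forced to be large, so on each annulus a single term dominates overwhelmingly and the lower bound becomes easy. The work is to produce one construction and phase choice handling both regimes uniformly, and to verify the comparison $\sum_k a_k r^{n_k}\asymp\we$ in full generality from log-convexity alone.
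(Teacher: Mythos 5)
Your necessity argument is correct and is essentially the paper's (the paper sums the maximum moduli $M_{|f_1|}+M_{|f_2|}$ and cites the fact that a sum of log-convex functions is log-convex, whereas you take the maximum, which is even a bit more elementary); the problem is the sufficiency direction, where your proposal contains a false step and leaves the decisive step unproven. First, the false step: you claim you can select supporting lines of $\psi(x)=\log\we(e^x)$ with integer slopes $n_1<n_2<\cdots$ growing \emph{at least geometrically} and still get $\sum_k a_k r^{n_k}\asymp\we(r)$. This fails precisely for rapidly growing weights. Take $\we(r)=\exp\bigl(1/(1-r)\bigr)$, so $\psi(x)\approx 1/|x|$ as $x\to 0-$: the tangent of slope $n$ touches the graph at $|x|\approx n^{-1/2}$, and at the tangency point of the slope-$4n$ tangent the slope-$n$ tangent lies below $\psi$ by about $\tfrac12\sqrt{n}$, so the envelope of geometrically spaced tangents undershoots $\we$ by an unbounded multiplicative factor. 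The correct spacing must be adaptive: in the paper's basic induction construction consecutive tangents $\ell_k,\ell_{k+1}$ are chosen to meet the graph of $\FF-\hg$ at a common point, and then for fast weights the frequencies grow \emph{slower} than any geometric progression (``weakly lacunary''). Your description of the two regimes is exactly backwards: rapid growth forces the ratios $n_{k+1}/n_k$ to be close to $1$, not large, while the doubling regime of \cite{RU91,AD12} is the one where Hadamard gaps occur.

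Second, the missing idea: you correctly identify the lower bound as ``the main obstacle'' but do not resolve it, and the difficulty does not become easy in either regime. In the correct construction the neighboring terms $k$ and $k+1$ have \emph{equal} modulus $e^{-\hg}\we(t_k)$ at the annulus boundary $|z|=t_k$ whatever the growth of $\we$, so a single series plus an unproven ``phase choice'' device is not a proof. The paper sidesteps cancellation entirely by splitting the series by parity of the index: $\GG_1(z)=\sum_j a_{2j+1}z^{n_{2j+1}}$ and $\GG_2(z)=\sum_j a_{2j}z^{n_{2j}}$. On an annulus $t_{k-1}\le|z|\le t_k$ with $k$ odd, $\GG_1$ contains the dominant term $a_k z^{n_k}$, and every other term of $\GG_1$ has index differing from $k$ by at least $2$; the key lemma (a consequence of the tangent-line geometry, with $\hg\ge 2$) shows those terms sum to less than half of $a_k t^{n_k}$, so $|\GG_1(z)|\gtrsim\we(|z|)$ by the triangle inequality alone, and symmetrically $|\GG_2|$ works on even annuli. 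A final adjustment (dividing out $z^{n_1}$, applying a rotation, and handling the finitely many zeros in $|z|\le t_0$) yields $|f_1|+|f_2|\asymp\we$ on all of $\Dbb$. Without this parity-splitting mechanism, or a genuine substitute for your phase argument, your sufficiency proof is incomplete.
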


In other words, the above theorem characterizes those $\we$
for which an analog of Theorem~\ref{t_RU} holds.

It is worth mentioning the ideas behind the proof of Theorem~\ref{l_disk}.
The implication \eqref{e_phi_2disk}$\Rightarrow$\eqref{e_logc_disk}
is deduced from Hadamard's three-circles theorem and properties of log-convex functions.
To prove the reverse implication, we use a geometric construction
that generates $f_1$ and $f_2$ as appropriate power series.
The construction is based on approximation of convex functions of one real variable by piecewise linear functions.
Similar ideas were used in Borichev's work \cite{Bo98} on Fock-type spaces.
Note that the series used in \cite{AD12}, \cite{KP11}, \cite{RU91}
are Hadamard lacunary.
By contrast, for rapidly growing weight functions, we obtain
weakly lacunary series: the corresponding frequencies
grow slower than any geometric progression.

\begin{rem}
Condition \eqref{e_logc_disk} is not a growth restriction:
given a weight function $v$, there exists a log-convex weight function $\we$
such that $\we(t)\ge v(t)$, $0\le t <1$.
As mentioned above, \eqref{e_logc_disk} is a natural regularity condition.
\end{rem}

\begin{rem}
Theorem~\ref{l_disk} guarantees that
property~\eqref{e_logc_disk}
also characterizes those radial weights
which are equivalent to $\max(|f_1|, |f_2|)$
for some $f_1, f_2\in\hol(\Dbb)$.
\end{rem}

\begin{rem}\label{r_diskM}
Given an integer $M$, one may consider the following problem:
Characterize those
radial weights $w$ for which there exist $f_1,\dots, f_M \in \hol(\Dbb)$
such that
\[
|f_1| +\dots + |f_M| \asymp \we.
\]
 The problem degenerates for $M=1$: by the maximum modulus principle,
 no radial weight is equivalent to the
modulus of a holomorphic function. For
$M=2, 3,\dots$, the solution of the problem does not depend on $M$ and it is
given by \eqref{e_logc_disk}; see Remark~\ref{r_aab2}.
\end{rem}

\subsubsection{Circular domains}\label{sss_int_main}
We extend Theorem~\ref{l_disk} to certain circular domains in $\cd$, $d\in\Nbb$.
In particular, an analog of Theorem~\ref{l_disk} holds for the unit ball of $\cd$.

Assume that
$\dom\subset\cd$ is a bounded, circular, strictly convex domain
with ${\mathcal C}^2$-boundary.
Let $\rom(z)$ denote the Minkowski functional on $\dom$, that is,
\[\rom(z) =
\inf\{\rho>0:\ z\in\rho\dom\}.
\]
If $\dom$ is the unit ball, then
$\rom(z)=|z|$.

Let $\hol(\dom)$ denote the space of holomorphic functions
on $\dom$. Given a weight function
$\we: [0, 1)\to (0, +\infty)$,
the growth space $\grw(\dom)$ consists of $f\in\hol(\dom)$
such that
\[
\|f\|_{\grw(\dom)} = \sup_{z\in\dom} \frac{|f(z)|}{\we(\rom(z))}<\infty.
\]
We obtain the following extension of Theorem~\ref{l_disk} to several complex variables:

\begin{theorem}\label{t_main}
Let $\dom\subset\cd$ be a bounded, circular, strictly convex domain
with ${\mathcal C}^2$-boundary.
If $\we: [0, 1) \to (0, +\infty)$ is a log-convex weight function,
then there exist functions
$f_m \in \grw(\dom)$, $1\le m \le M = M(\dom)$ such that
\begin{equation}\label{e_phi_2}
\sum_{m=1}^{M} |f_m(z)| \ge \we(\rom(z)),
\quad z\in \dom.
\end{equation}
Conversely, if \eqref{e_phi_2} holds for some $M\in\Nbb$ and some $f_1,\dots, f_M\in\hol(\dom)$,
then $\we(t)$ is equivalent to a log-convex weight function.
\end{theorem}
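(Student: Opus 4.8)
The plan is to treat the two implications separately, disposing of the (easier) converse by Hadamard's three-circles theorem, exactly as in the disk case of Theorem~\ref{l_disk}, and attacking the (harder) existence statement by a multivariable adaptation of the geometric lacunary construction. For the converse I would use the two-sided bound $\sum_{m=1}^M|f_m|\asymp\we\circ\rom$ (the lower estimate is \eqref{e_phi_2}, the matching upper estimate being the growth-space membership $f_m\in\grw(\dom)$). The key structural observation is that $S:=\sum_m|f_m|$ has $\log S=\log\sum_m e^{\log|f_m|}$ plurisubharmonic, since the log-sum-exp function is convex and non-decreasing in each variable while each $\log|f_m|$ is plurisubharmonic. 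Next I would exploit the geometry: as $\dom$ is circular and convex, $0\in\dom$, every complex line through the origin meets $\dom$ in a disk, and $\rom(\la\za)=|\la|$ for $\za\in\partial\dom$. On such a slice $\la\mapsto\log S(\la\za)$ is subharmonic, so by the three-circles theorem $\max_{|\la|=t}\log S(\la\za)$ is convex in $\log t$. Setting $V(t):=\max_{\rom(z)=t}S(z)$, one has $\log V(t)=\sup_{\za\in\partial\dom}\max_{|\la|=t}\log S(\la\za)$, a supremum of functions convex in $\log t$, hence itself convex in $\log t$; thus $V$ is log-convex. Since $S\asymp\we\circ\rom$ and $\we$ is radial, $V\asymp\we$, and one checks $V$ is a genuine weight function (continuous, non-decreasing by the maximum principle for $\log S$, unbounded). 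Therefore $\we$ is equivalent to the log-convex weight $V$.

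For the existence direction, with $\we$ log-convex, I would build the $f_m$ as weakly lacunary series adapted to $\dom$ in the spirit of Borichev's construction. The first ingredient is a family of holomorphic peak functions supplied by the strict convexity and $\mathcal{C}^2$-smoothness of $\partial\dom$: to each $\xi\in\partial\dom$ one attaches $h_\xi\in\hol(\dom)$ with $h_\xi(\xi)=1$ and a quantitative off-diagonal decay $|h_\xi(z)|\le\exp(-c\,\delta(z,\xi))$, where $\delta$ is the anisotropic (Koranyi-type) distance governed by the second fundamental form. These take over the role of the monomials $z^{n}$ in the disk. The second ingredient is a slowly increasing sequence of radii $t_k\uparrow 1$ and frequencies $n_k$ — Hadamard lacunary when $\we$ grows slowly, but only weakly lacunary (growing slower than any geometric progression) when $\we$ grows rapidly — together with coefficients calibrated to $\we(t_k)$, so that $h_\xi^{\,n_k}$ has modulus $\gtrsim\we(t_k)$ on the shell $\rom(z)\approx t_k$ near $\xi$ and is negligible elsewhere.

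The number $M=M(\dom)$ then arises from a bounded covering argument: for each shell one covers $\partial\dom$ by finitely many caps of the relevant scale and applies a Besicovitch/Vitali-type coloring, whose multiplicity depends only on the real dimension and the geometry of $\dom$, so that caps of a common color have well-separated peaks across all shells. Grouping the building blocks by color defines $f_1,\dots,f_{M(\dom)}$. The lower bound \eqref{e_phi_2} follows because, for each $z$, the term attached to the cap of the appropriate color meeting the radial projection of $z$ dominates and contributes $\gtrsim\we(\rom(z))$, while the upper bound $f_m\in\grw(\dom)$ follows from the peak-function decay together with the log-convexity of $\we$, which makes the lacunary tails sum geometrically.

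The main obstacle I anticipate is precisely this simultaneous control in the existence direction: arranging $\sum_m|f_m|$ to be bounded below by $\we\circ\rom$ at every point and in every boundary direction, while keeping each individual $f_m$ in the growth space. This is delicate because the peak functions' off-diagonal decay — hence the $\mathcal{C}^2$ strict convexity — must be strong enough both to suppress destructive interference within a single color and to bound the growth-space norm, and the frequency sequence must be tuned to the possibly very rapid growth of the log-convex weight $\we$. Verifying that one covering multiplicity $M(\dom)$ works uniformly across all shells is the crux distinguishing the several-variable case from the two-function disk construction.
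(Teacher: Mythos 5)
Your converse argument is correct and is only a mild variant of the paper's: the paper also reduces to the disk by slicing (fix $\za\in\partial\dom$ and apply Remark~\ref{r_aab2} to the slice functions $g_m(\la)=f_m(\la\za)$), whereas you work with the plurisubharmonic function $\log\sum_m|f_m|$ and take a supremum over slices; both rest on Hadamard's three-circles theorem, and both need, as you correctly note, the upper bound coming from $f_m\in\grw(\dom)$ in addition to \eqref{e_phi_2}.

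The existence half, however, has a genuine gap. The paper's proof hinges on one tool your proposal lacks: the Aleksandrov--Ryll--Wojtaszczyk polynomials (Theorem~\ref{t_kot}, after Kot). For the given domain there exist $\de=\de(\dom)\in(0,1)$ and $Q=Q(\dom)\in\Nbb$ such that for every degree $n$ there are homogeneous polynomials $W_q[n]$, $1\le q\le Q$, with $\|W_q[n]\|_{L^\infty(\partial\dom)}\le 1$ and $\max_{1\le q\le Q}|W_q[n](\za)|\ge\de$ at \emph{every} $\za\in\partial\dom$. Homogeneity gives $|W_q[n](t\za)|=t^n\,|W_q[n](\za)|$, so these polynomials behave radially exactly like the monomials $z^n$ in the disk; the one-variable tangent-line construction and Lemmas~\ref{l_real_segment} and \ref{l_real_segment_delta} then transfer essentially verbatim, yielding $f_{q+sQ}=\sum_{j} a_{2j+1+s}W_q[n_{2j+1+s}]$ for $s=0,1$, and $M=2Q+1$ (the extra constant function handles the core $\rom(z)\le t_0$). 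No covering, coloring, or interference analysis is needed beyond the disk case. Your replacement program --- peak functions $h_\xi$ with Gaussian-type off-diagonal decay, powers $h_\xi^{n_k}$ localized on caps, and a Besicovitch-type coloring of multiplicity $M(\dom)$ --- postulates rather than proves its key ingredients: (a) the quantitative decay of $h_\xi$ on a merely $\mathcal{C}^2$, strictly convex circular domain; (b) non-cancellation at a given point of the same-color, same-shell terms, of which there are unboundedly many per shell since the number of caps grows with $n_k$; (c) summability of same-color tails and cross-shell terms, needed simultaneously for the lower bound \eqref{e_phi_2} and for $f_m\in\grw(\dom)$, with a covering multiplicity uniform over all scales. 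You yourself flag (b)--(c) as the ``main obstacle'' and the ``crux'' and leave them unresolved; but these steps \emph{are} the proof, so as written the existence direction is a plausible program, not an argument. The missing idea that makes the several-variable case short is precisely that a finite system of homogeneous ARW polynomials exists for the domain: it collapses all of your covering and interference issues into the fixed constant $Q(\dom)$ and reduces the whole construction to the one-dimensional lemmas.
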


\begin{rem}
Recall that, by Theorem~\ref{l_disk} and Remark~\ref{r_diskM},
the optimal value of $M$ for $\Dbb$ is equal to $2$.
It would be interesting to find the optimal value of $M(\bd)$
for the unit ball $\bd$ of $\cd$, $d\ge 2$.
\end{rem}

\subsection{Applications}\label{ss_appli}
We consider composition and multiplication operators,
Volter\-ra type operators and extended Ces\`aro operators on $\grw(\dom)$.

\subsection{Organization of the paper}\label{ss_int_org}
Section~\ref{s_model} is 
devoted to the proof of Theorem~\ref{l_disk}.
Circular domains in $\cd$ are considered in Section~\ref{s_circ}.
Applications are discussed in the final Section~\ref{s_appl};
main applications are related to the unit ball of $\cd$, $d\ge 1$.

\subsection*{Acknowledgements}
The authors are grateful to Alexander Borichev
for useful discussions and elucidating comments.
Also, we are thankful to the anonymous referee
for helpful suggestions and constructive remarks.

\section{$\dom$ is the unit disk}\label{s_model}
In this section, we prove Theorem~\ref{l_disk}.

\subsection{Equivalence to a log-convex weight function
is necessary}\label{ss_aab}
Given a continuous function $u: \Dbb\to (0, +\infty)$, put
\[
M_u(r) = \max_{|z|=r} u(z), \quad 0\le r <1.
\]
Now, suppose that $\we$ is a radial weight,
$f_1, f_2\in\hol(\Dbb)$
and $|f_1|+|f_2|\asymp\we$.
Without loss of generality, we also assume that
$f_j(0)\neq 0$ for $j=1,2$.
By Hadamard's three circles theorem,
$M_{|f_1|}$ and $M_{|f_2|}$ are log-convex functions on the interval $[0, 1)$;
see, e.g., \cite[Ch.~6, Sect.~3.13]{Con78}.
Hence, the sum $M_{|f_1|} +M_{|f_2|} $ is also log-convex
(see, for example, \cite[p.~51]{KPS82}).

Observe that
\[
M_{|f_j|} \le M_{|f_1|+|f_2|} \le M_{|f_1|} + M_{|f_2|}, \quad j=1,2.
\]
Thus, we have the following equivalences on $[0, 1)$:
\[
\we \asymp M_{|f_1|+|f_2|} \asymp M_{|f_1|} + M_{|f_2|}.
\]
Since the function $M_{|f_1|}+M_{|f_2|}$ is log-convex,
the implication \eqref{e_phi_2disk}$\Rightarrow$\eqref{e_logc_disk}
is proved.

\begin{rem}\label{r_aab2}
Let $\we$ be a radial weight such that
\[
\we\asymp |f_1|+\dots +|f_K|
\]
for some $f_1, \dots, f_K \in \hol(\Dbb)$, $K\in\Nbb$.
Repeating the arguments used above for $K=2$,
we deduce that $\we(t)$ is equivalent to a log-convex weight function.
\end{rem}

The rest of the section is devoted to a constructive proof of the implication
\eqref{e_logc_disk}$\Rightarrow$\eqref{e_phi_2disk}.

\subsection{Log-convex weight functions: preliminaries}\label{ss_logc_prelim}
Let $\ff: [0,1) \to (0,+\infty)$
be a log-convex weight function.
Recall that, by definition, $\log\ff(t)$ is a convex function of $\log t$, that is,
\[
\FF(x)=\FF_\we(x)=\log \ff(e^x), \quad x\in (-\infty, 0),
\]
is a convex function.

\begin{rem}\label{r_logconv}
It is natural to compare the above property and the following one:
$\log\we(t)$ is a convex function of $t$.
The latter property implies that $\FF_\we$
is convex, as the composition of two
increasing convex functions.
The reverse implication does not hold.
Moreover, there exist log-convex
weight functions that are not even equivalent to the exponent of a
convex function.
\end{rem}

In what follows, we argue in terms of the function $\FF$.
Observe that $\ff(t)=\exp(\FF(\log t))$, $0< t < 1$.

To prove the implication \eqref{e_logc_disk}$\Rightarrow$\eqref{e_phi_2disk},
we may replace $\ff$ by an equivalent weight function.
So, without loss of generality, we assume that
$\FF$ is a strictly convex $\mathcal{C}^2$-function.
In particular, the tangent
to the graph of $\FF$ is unique
at each point $(x, \FF(x))$, $x<0$.
Also, below we repeatedly use the following property without explicit reference:
the slope of the tangent
to the graph of $\FF$ at $(x, \FF(x))$
is a strictly increasing function of $x\in(-\infty, 0)$.

Finally, note that $\log v(e^x)=\log a + \dd x$ for $v(t)=a t^\dd$,
$a>0$, $\dd>0$.
This observation allows to reduce the proof
of the implication \eqref{e_logc_disk}$\Rightarrow$\eqref{e_phi_2disk}
to certain manipulations with linear functions.

\subsection{Basic induction construction}\label{ss_basind}
Fix a number $x_0\in (-\infty, 0)$ and a parameter $h>0$.
By induction, we construct linear functions $\ell_k(x)$
and numbers $x_k\in (x_0, 0)$, $k=1,2,\dots$, such that

\begin{itemize}
\item
$\ell_k(x)$ is a tangent to the graph of $\FF(x)$;
\item
$\ell_k(x)$ intersects the graph of $\FF(x) -\hg$ at the points whose $x$-coordinates
are $x_{k-1}$ and $x_k$, $x_{k-1} < x_k$.
\end{itemize}
Note that $\ell_k(x)$ and $x_k$ are uniquely defined by the above properties.
So, the induction construction proceeds. See Figure~\ref{fig_1}.

Given $\ell_k(x)$, we define parameters $a_k>0$ and $\dd_k>0$ by the following identity:
\[
\ell_k(x) = \log a_k + \dd_k x, \quad x\in\Rbb.
\]
Note that the sequences $\{x_k\}_{k=0}^\infty$ and $\{\dd_k\}_{k=1}^\infty$
monotonically increase, $x_k \to 0$ and $\dd_k\to\infty$ as $k\to \infty$.
Also, we use the following brief notation:
$t_k = \exp(x_k)$, $k=0,1,\dots$. Hence, the positive numbers
$t_k$ monotonically increase to $1$ as $k\to \infty$.

Formally, the above construction works for any $\hg>0$.
In applications, we use a sufficiently large parameter $h$,
say, $h=2$, in the case of the unit disk.

\begin{figure}
  \includegraphics[width=0.8\textwidth]{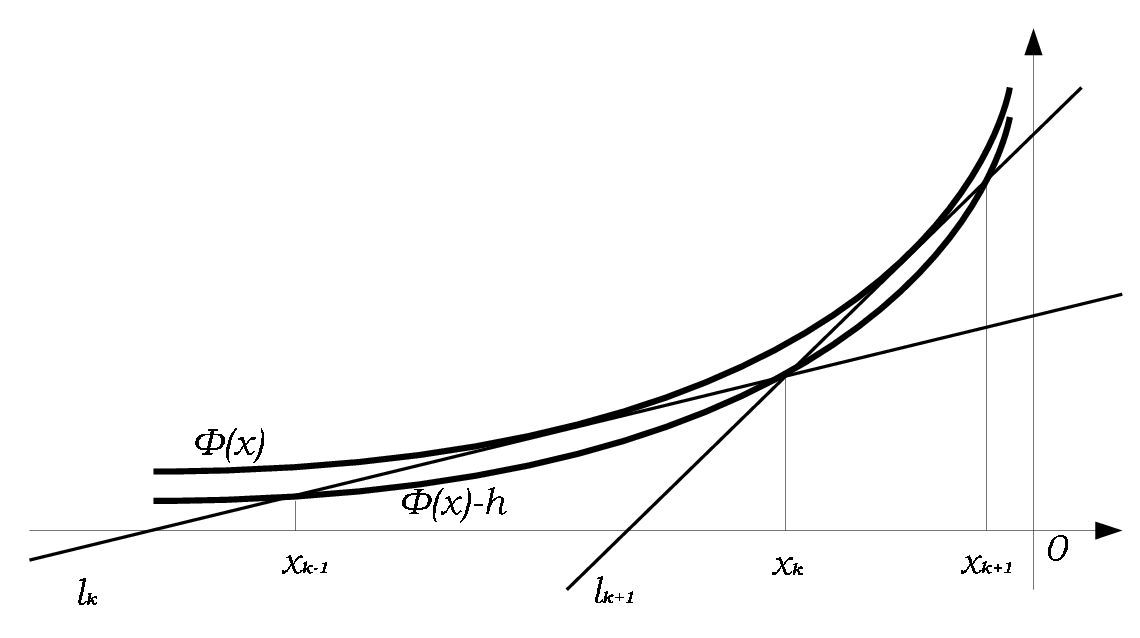}
\caption{Basic induction construction}
\label{fig_1}       
\end{figure}

\subsection{Auxiliary estimates}
\begin{lemma}\label{l_real_picture}
Let the numbers $x_k$, $k=0,1,\dots$,
and the linear functions $\ell_k$, $k=1,2,\dots$,
be those introduced in subsection~\ref{ss_basind}.
Then
\begin{align}
\ell_{k}(x) \ge \ell_{k+1}(x) +\hg \quad&\textrm{for all\ }x,\ x_0\le x\le x_{k-1},\ k\ge 1;
\label{e_picture_1}
\\
\ell_{k+1}(x) \ge \ell_{k}(x) +\hg \quad&\textrm{for all\ }x,\ x_{k+1}\le x< 0,\ k\ge 1.
\label{e_picture_2}
\end{align}
\end{lemma}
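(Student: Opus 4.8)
The plan is to reduce both inequalities to a single quantitative estimate comparing consecutive slopes $\dd_k$ with the gaps $x_{k+1}-x_k$, and then to extract that estimate from the convexity of $\FF$. First I would make explicit the structure hidden in the construction of subsection~\ref{ss_basind}. For each $k\ge 1$ let $\xi_k\in(x_{k-1},x_k)$ denote the point where $\ell_k$ touches the graph of $\FF$, so that $\dd_k=\FF'(\xi_k)$; since $\FF'$ is strictly increasing, $\dd_k$ increases with $k$. Put $g_k=\FF-\ell_k$, a nonnegative convex function vanishing only at $\xi_k$, for which the defining property of $\ell_k$ reads $g_k(x_{k-1})=g_k(x_k)=\hg$. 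The key elementary observation is that consecutive lines meet on the curve $\FF-\hg$: as $x_k$ is the right endpoint for $\ell_k$ and the left endpoint for $\ell_{k+1}$, we get $\ell_k(x_k)=\ell_{k+1}(x_k)=\FF(x_k)-\hg$, so that
\[
\ell_{k+1}(x)-\ell_k(x)=(\dd_{k+1}-\dd_k)(x-x_k).
\]
This linear function increases in $x$, so \eqref{e_picture_2} follows once it holds at $x=x_{k+1}$, i.e. once $(\dd_{k+1}-\dd_k)(x_{k+1}-x_k)\ge\hg$ is established; the same remark applied to the pair $\ell_{k+1},\ell_{k+2}$ (which meet at $x_{k+1}$) reduces \eqref{e_picture_1} to $(\dd_{k+2}-\dd_{k+1})(x_{k+1}-x_k)\ge\hg$.

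The analytic heart is to write $\hg$ as an integral of $\FF''$. Integrating $g_{k+1}$ twice out of the tangency point $\xi_{k+1}$, where both $g_{k+1}$ and $g_{k+1}'=\FF'-\dd_{k+1}$ vanish, gives
\[
\hg=g_{k+1}(x_k)=\int_{x_k}^{\xi_{k+1}}(s-x_k)\FF''(s)\,ds,
\qquad
\hg=g_{k+1}(x_{k+1})=\int_{\xi_{k+1}}^{x_{k+1}}(x_{k+1}-s)\FF''(s)\,ds.
\]
In each integral I bound the linear weight by the length of the interval of integration and recognize $\int\FF''$ as an increment of $\FF'$. For \eqref{e_picture_2} the first representation yields $\hg\le(\xi_{k+1}-x_k)\bigl(\FF'(\xi_{k+1})-\FF'(x_k)\bigr)$; then $\xi_{k+1}-x_k\le x_{k+1}-x_k$ together with $\FF'(\xi_{k+1})=\dd_{k+1}$ and $\FF'(x_k)\ge\FF'(\xi_k)=\dd_k$ (monotonicity of $\FF'$ and $x_k>\xi_k$) gives exactly $\hg\le(x_{k+1}-x_k)(\dd_{k+1}-\dd_k)$. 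For \eqref{e_picture_1} the second representation yields $\hg\le(x_{k+1}-\xi_{k+1})\bigl(\FF'(x_{k+1})-\FF'(\xi_{k+1})\bigr)$, and now $x_{k+1}-\xi_{k+1}\le x_{k+1}-x_k$ (since $\xi_{k+1}>x_k$) together with $\FF'(x_{k+1})\le\FF'(\xi_{k+2})=\dd_{k+2}$ (since $x_{k+1}<\xi_{k+2}$) gives $\hg\le(x_{k+1}-x_k)(\dd_{k+2}-\dd_{k+1})$.

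Finally I would assemble the pieces: inserting these two bounds into the linear-difference formula and using its monotonicity in $x$ delivers \eqref{e_picture_1} and \eqref{e_picture_2} on the stated ranges. The step I expect to be the main obstacle is selecting, for each target inequality, the correct endpoint ($x_k$ versus $x_{k+1}$) in the double-integral representation of $\hg$: both inequalities involve the same gap $x_{k+1}-x_k$, but one needs the jump of $\FF'$ to the left of $\xi_{k+1}$ and the other the jump to the right, and only the matching choice lets the crude comparisons $\FF'(x_k)\ge\dd_k$ and $\FF'(x_{k+1})\le\dd_{k+2}$ close the estimate. Everything else is routine bookkeeping with the monotonicity of $\FF'$ and the positivity of the interval lengths.
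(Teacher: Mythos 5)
Your proof is correct, and it takes a genuinely different route from the paper's. The paper argues synthetically: for \eqref{e_picture_1} it introduces the auxiliary tangent line $L_{k+2}$ to the graph of $\FF-\hg$ at $x_{k+1}$, chains the two inequalities $(\ell_{k+1}-\hg)(\xx_k)=(\FF-\hg)(\xx_k)>L_{k+2}(\xx_k)>\ell_{k+2}(\xx_k)$ at the tangency abscissa $\xx_k$ of $\ell_{k+1}$, and then propagates the strict inequality to all $x\le \xx_k$ by comparing slopes; no second derivative ever appears. You instead exploit an identity the paper never states explicitly — consecutive lines $\ell_k,\ell_{k+1}$ both pass through $(x_k,\FF(x_k)-\hg)$, so their difference is the explicit linear function $(\dd_{k+1}-\dd_k)(x-x_k)$ — which collapses both inequalities to the single quantitative bound
\begin{equation*}
(\dd_{k+1}-\dd_k)(x_{k+1}-x_k)\ \ge\ \hg ,
\end{equation*}
and you then derive this (and its shifted companion) from the Taylor representation of $\hg=g_{k+1}(x_k)=g_{k+1}(x_{k+1})$ as an integral of $\FF''$, using monotonicity of $\FF'$ and the interlacing $x_k<\xi_{k+1}<x_{k+1}<\xi_{k+2}$. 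Your choice of which endpoint representation to use for each inequality is exactly right, and the crude bounds $\FF'(x_k)\ge\dd_k$, $\FF'(x_{k+1})\le\dd_{k+2}$ close both estimates. What your approach buys is a sharper, reusable quantitative statement relating slope increments to the gaps $x_{k+1}-x_k$, which makes the mechanism of the lemma transparent; what it costs is reliance on the $\mathcal{C}^2$ regularity of $\FF$ (harmless here, since the paper assumes this normalization without loss of generality in subsection~\ref{ss_logc_prelim}, but the paper's comparison-of-lines argument would survive for an arbitrary strictly convex $\FF$ with unique tangents).
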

\begin{proof}
We verify property~\eqref{e_picture_1}.
The proof of \eqref{e_picture_2} is analogous, so we omit it.

For $k=1,2,\dots$, let $\xx_{k-1}$ denote the $x$-coordinate of the point
at which $\ell_{k}$ is tangent to the graph of $\FF$.
Remark that $x_{k-1} < \xx_{k-1} < x_{k}$ by the definition
of $\ell_{k}$.

We have $\ell_{k+1}(x_k)=\ell_k(x_k)$, $\dd_{k+1}>\dd_k$ and $x_{k-1}<\xx_{k-1}$,
thus,
\[
\ell_k(x) - \ell_{k+1}(x)\ge \ell_k(\xx_{k-1}) - \ell_{k+1}(\xx_{k-1})
\]
for all $x$, $x_0\le x \le x_{k-1}$.
Therefore, \eqref{e_picture_1} follows from the estimate
$\ell_k(\xx_{k-1}) - \ell_{k+1}(\xx_{k-1}) \ge \FF(\xx_{k-1}) - (\FF(\xx_{k-1})-h) =h$.
\end{proof}

\begin{lemma}\label{l_real_segment}
Let the numbers $x_k$, $t_k$, $k=0,1,\dots$,
and the linear functions $\ell_k$, $k=1,2,\dots$,
be those introduced in subsection~\ref{ss_basind}.
Assume that $\hg\ge 2$.
Then, for $k=1,2,\dots$,
\begin{itemize}
\item[(i)] 
$a_k t^{\dd_k} \le \ff(t),
\quad t\in [t_0, 1);$
\item[(ii)] 
$e^{-h} \ff(t)
\le a_k t^{\dd_k}, \quad t\in [t_{k-1}, t_k];$
\item[(iii)] 
$\sum_{m\ge 1,\ |m-k|\ge 2} a_m t^{\dd_m} < \frac{1}{2} a_k t^{\dd_k},
\quad t\in [t_{k-1}, t_k].$
\end{itemize}
\end{lemma}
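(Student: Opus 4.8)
The plan is to pass to the logarithmic variable $x=\log t$ throughout, so that $a_k t^{\dd_k}=\exp(\ell_k(x))$ and $\ff(t)=\exp(\FF(x))$; all three assertions then become statements comparing the linear functions $\ell_k$ with $\FF$ and with one another. In these terms (i) asserts $\ell_k(x)\le\FF(x)$ for $x\in[x_0,0)$, which is immediate: $\ell_k$ is a tangent line to the convex function $\FF$, and a convex function lies above each of its tangents (in fact this holds for every $x<0$). Likewise (ii) asserts $\FF(x)-\hg\le\ell_k(x)$ for $x\in[x_{k-1},x_k]$; but by construction $\ell_k$ meets the graph of $\FF-\hg$ exactly at $x_{k-1}$ and $x_k$, so on $[x_{k-1},x_k]$ the restriction of $\ell_k$ is precisely the chord of the convex function $\FF-\hg$ joining these two points, and a convex function lies below its chords. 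Exponentiating gives (i) and (ii).

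The substance is (iii). Here I would first record the structural identity that drives everything: since $\ell_k$ meets $\FF-\hg$ at $x_{k-1}$ and $x_k$, while $\ell_{k+1}$ meets it at $x_k$ and $x_{k+1}$, both tangent lines pass through the common point $(x_k,\FF(x_k)-\hg)$; that is, $\ell_k(x_k)=\ell_{k+1}(x_k)=\FF(x_k)-\hg$, and similarly $\ell_{k-1}(x_{k-1})=\ell_k(x_{k-1})=\FF(x_{k-1})-\hg$. Evaluating \eqref{e_picture_1} at the admissible point $x=x_k$ (note $x_k\le x_j$ precisely when $j\ge k$) yields the descending chain $\ell_{j+1}(x_k)\ge\ell_{j+2}(x_k)+\hg$ for all $j\ge k$; iterating from the base case $\ell_{k+1}(x_k)=\ell_k(x_k)$ gives $\ell_m(x_k)\le\ell_k(x_k)-(m-k-1)\hg$ for every $m\ge k+2$. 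Symmetrically, evaluating \eqref{e_picture_2} at $x=x_{k-1}$ (admissible when $j\le k-2$) and iterating down from $\ell_{k-1}(x_{k-1})=\ell_k(x_{k-1})$ gives $\ell_m(x_{k-1})\le\ell_k(x_{k-1})-(k-1-m)\hg$ for every $m$ with $1\le m\le k-2$.

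The remaining point --- which I expect to be the only real obstacle --- is to upgrade these bounds at a single endpoint to uniform bounds on the whole interval $[x_{k-1},x_k]$. For this I would use that each difference $\ell_m-\ell_k$ is linear with slope $\dd_m-\dd_k$, hence monotone, so its maximum over $[x_{k-1},x_k]$ is attained at an endpoint. For $m\ge k+2$ the slope is positive (recall $\dd_m$ increases with $m$), so the maximum is at $x_k$ and we get $\ell_m(x)\le\ell_k(x)-(m-k-1)\hg$ on all of $[x_{k-1},x_k]$; for $m\le k-2$ the slope is negative, the maximum is at $x_{k-1}$, and we get $\ell_m(x)\le\ell_k(x)-(k-1-m)\hg$. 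Exponentiating and summing the two resulting geometric series gives
\[
\sum_{m\ge 1,\ |m-k|\ge 2} a_m t^{\dd_m}
\le 2\,a_k t^{\dd_k}\sum_{i\ge 1} e^{-i\hg}
= a_k t^{\dd_k}\,\frac{2e^{-\hg}}{1-e^{-\hg}}
\]
for $t\in[t_{k-1},t_k]$. Finally, since $\hg\ge 2$ one has $2e^{-\hg}/(1-e^{-\hg})\le 2/(e^2-1)<\tfrac12$, which is exactly (iii). The role of the hypothesis $\hg\ge2$ is thus only to make the common ratio small enough that the two tails sum to less than one half; any sufficiently large $\hg$ would do.
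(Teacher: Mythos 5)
Your proof is correct and takes essentially the same approach as the paper: (i) and (ii) follow by exponentiating the tangent/chord inequalities for the convex function $\FF$, and (iii) combines the two estimates of Lemma~\ref{l_real_picture} with a geometric series whose ratio $e^{-\hg}\le e^{-2}$ forces the sum below $\tfrac12$. The only cosmetic difference is that the paper applies \eqref{e_picture_1} and \eqref{e_picture_2} pointwise on the whole interval via a telescoping sum of consecutive differences, whereas you evaluate at a single endpoint (using the common point $\ell_k(x_k)=\ell_{k+1}(x_k)=\FF(x_k)-\hg$) and then extend by monotonicity of the linear differences $\ell_m-\ell_k$; both mechanisms are equally valid.
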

\begin{proof}
Let $k\in\Nbb$.
We have $\ell_k(x) \le \FF(x)$ for $x_0\le x<0$.
Hence, taking the exponentials, we obtain (i)
by the definitions of $\FF$, $\ell_k$, $a_k$ and $\dd_k$.
Analogously, the inequality
$\FF(x)-\hg \le \ell_k(x)$, $x\in [x_{k-1}, x_k]$,
implies (ii).

It remains to prove (iii). 
First, fix a $k\ge 1$ and assume that
$m\ge k+2$. For $x_0 \le x \le x_k$, we have
\[
\ell_{k}(x) - \ell_m(x)
\ge \ell_{k+1}(x) - \ell_m(x)
\\
=\sum_{j=k+1}^{m-1} [\ell_j(x) -\ell_{j+1}(x)].
\]
Consider the latter sum.
For $k+1\le j\le m-1$,
property~\eqref{e_picture_1} guarantees that
$\ell_j(x) - \ell_{j+1}(x) \ge \hg$
for all $x_0\le x\le x_{j-1}$,
hence, for all $x_0\le x\le x_k$.
In sum, we have
\[
\ell_k(x) - \ell_m(x) \ge  \hg(m-k-1)
\quad\textrm{for\ } x_0\le x\le x_k.
\]
Taking the exponentials and using the definitions of $\ell_m$ and $\ell_k$,
we obtain
\[
a_m t^{\dd_m} \le a_k t^{\dd_k} \frac{1}{e^{\hg(m-k-1)}} \quad\textrm{for\ } t_0\le t\le t_k.
\]
Thus,
\begin{equation}\label{e_m_large}
\sum_{m\ge k+2} a_m t^{\dd_m} \le a_k t^{\dd_k}
\left( \frac{1}{e^{\hg}} + \frac{1}{e^{2\hg}} + \frac{1}{e^{3\hg}} + \dots\right)
 \quad\textrm{for\ } t_0\le t\le t_k.
\end{equation}

Second, fix $k\ge 3$ and assume that $1\le m \le k-2$.
Essentially, we argue as above, replacing \eqref{e_picture_1} by \eqref{e_picture_2}.
Namely, for $0> x\ge x_{k-1}$, we have
\[
\ell_{k}(x) - \ell_m(x)
\ge \ell_{k-1}(x) - \ell_m(x)
=\sum_{j=m}^{k-2} [\ell_{j+1}(x) -\ell_{j}(x)].
\]
For $m\le j \le k-2$, property~\eqref{e_picture_2} guarantees that
$\ell_{j+1}(x) - \ell_{j}(x) \ge \hg$ for all $0> x\ge x_{k-1}$.
In sum, we have
\[
\ell_m(x) \le \ell_k(x) - \hg (k-m-1)
\quad\textrm{for\ } 0> x\ge x_{k-1}.
\]
Taking the exponentials and summing over $m=1,\dots, k-2$,
we obtain
\begin{equation}\label{e_m_small}
\sum_{m=1}^{k-2} a_m t^{\dd_m} \le a_k t^{\dd_k}
\left( \frac{1}{e^{\hg}} + \frac{1}{e^{2\hg}} + \frac{1}{e^{3\hg}} + \dots\right)
\quad\textrm{for\ } 1> t\ge t_{k-1}.
\end{equation}

In sum, (iii) 
holds by \eqref{e_m_large} and \eqref{e_m_small},
because $\hg\ge 2$.
The proof of the lemma is complete.
\end{proof}

\subsection{Proof of \eqref{e_logc_disk}$\Rightarrow$\eqref{e_phi_2disk}}
Fix a parameter $h\ge 2$ and a number $x_0\in (-\infty, 0)$ such that $t_0=\exp(x_0)> \frac{9}{10}$ .
The basic induction construction provides numbers $x_k$, $t_k$, $\dd_k$ and $a_k$,
$k=1,2,\dots$.
Put $\ee_k = [\dd_k] +1$, $k=1,2,\dots$,
where $[\dd_k]$ denotes the integer part of $\dd_k$.
Note that $\ee_k < \ee_{k+1}$
for all $k=1,2,\dots$. Indeed, we have $|x_0|<1$, thus,
\begin{equation}\label{e_bebe}
\dd_{k+1} -\dd_k \ge \frac{h}{x_k - \xx_{k-1}} > \frac{h}{|x_0|} >1.
\end{equation}

So, put
\begin{align*}
\GG_1(z) &= \sum_{j=0}^\infty a_{2j+1} z^{\ee_{2j+1}};
\\
\GG_2(z) &= \sum_{j=1}^\infty a_{2j} z^{\ee_{2j}}.
\end{align*}
The estimates below guarantee that $\GG_1$ and $\GG_2$ are well-defined functions of $z\in\Dbb$.
In fact, we claim that
\begin{equation}\label{e_GG}
\frac{2}{5}e^{-h} \we(z) < |\GG_1(z)|+ |\GG_2(z)| <4\we(z),
\quad t_0 < |z| <1.
\end{equation}

To prove the above estimates, put $t=|z|$.
For $t\in\left(\frac{9}{10}, 1\right)$, we have
\[
1<\frac{a_k t^{\dd_k}}{a_k t^{\ee_k}} \le \frac{1}{t} \le \frac{10}{9}.
\]
Therefore, properties (i--iii) from Lemma~\ref{l_real_segment}
imply the following inequalities:
\begin{itemize}
\item[(i')] 
$a_k t^{\ee_k} \le \ff(t),
\quad t\in [t_0, 1);$
\item[(ii')] 
$\frac{9}{10} e^{-h} \ff(t)
\le a_k t^{\ee_k}, \quad t\in [t_{k-1}, t_k];$
\item[(iii')] 
\[
\begin{split}
\sum_{m\ge 1,\ |m-k|\ge 2} a_m t^{\ee_m}
&<\sum_{m\ge 1,\ |m-k|\ge 2} a_m t^{\dd_m}
\\
&<\frac{1}{2} a_k t^{\dd_k}
\le \frac{5}{9} a_k t^{\ee_k},
\quad t\in [t_{k-1}, t_k].
\end{split}
\]
\end{itemize}
Now, assume that $t\in [t_{k-1}, t_k]$
for some odd number $k$.
On the one hand,
\begin{align*}
\frac{2}{5}e^{-h} \we(t)
&\overset{\textrm{(ii')}}{<}
\frac{4}{9}a_{k} t^{\ee_{k}}
\\
&\overset{\textrm{(iii')}}{<}
a_{k} t^{\ee_{k}} - \sum_{m\ge 1,\ |m-k|\ge 2} a_{m} t^{\ee_{m}}
\\
&\le \left| \sum_{j=0}^\infty a_{2j+1} z^{\ee_{2j+1}} \right|
\\
&= |\GG_1(z)|
\\
&\le |\GG_1(z)| + |\GG_2(z)|.
\end{align*}
On the other hand, if $k\ge 3$, then
\begin{align*}
|\GG_1(z)| + |\GG_2(z)|
&\le \sum_{m\ge 1} a_m t^{\ee_m}
\\
&=\left(a_k t^{\ee_k} + \sum_{m\ge 1,\ |m-k|\ge 2} a_m t^{\ee_m}\right) +
\left(a_{k-1} t^{\ee_{k-1}} + a_{k+1} t^{\ee_{k+1}}\right)
\\
&\overset{\textrm{(i', iii')}}{\le}
\left(1+ \frac{5}{9}\right) a_k t^{\ee_k} + 2\we(t)
\\
&\overset{\textrm{(i')}}{<}4\we(t).
\end{align*}
For $k=1$, the above arguments are even more simple.

If $k$ is even, then we have analogous estimates. So, \eqref{e_GG} holds.

To finish the proof,
we modify the functions $\GG_1$ and $\GG_2$.
Namely, we obtain functions $f_1, f_2$ having the same property as $g_1, g_2$
but without common zeros.
So, we have $\GG_1(z) = z^{\ee_1} G_1(z)$,
where $G_1\in\hol(\Dbb)$ and $G_1(0)\neq 0$.
Clearly, $G_1\in \grw(\Dbb)$ and
$|G_1(z)|\ge |\GG_1(z)|$ for all $z\in\Dbb$.
Therefore,
\[
\frac{2}{5}e^{-h}\we(z) < |G_1(e^{i\theta}z)|+ |g_2(z)|,
\quad t_0 < |z| <1,
\]
for all $\theta\in [0, 2\pi)$.
The functions $G_1$ and $\GG_2$ have a finite number of zeros in the disk $\{|z|\le t_0\}$
and $G_1(0)\neq 0$; hence, the required property
\[
|f_1(z)| + |f_2(z)| \asymp \we(z),
\quad z\in \Dbb,
\]
holds for $f_2(z)=\GG_2(z)$ and $f_1(z)=G_1(e^{i\theta}z)$ with an appropriate $\theta\in [0, 2\pi)$.
So, \eqref{e_logc_disk} implies \eqref{e_phi_2disk}.
The proof of Theorem~\ref{l_disk} is finished.

\begin{rem}
As mentioned in the introduction, the sequence $\{n_k\}_{k=1}^\infty$
is not always lacunary in the classical Hadamard sense.
In fact, $\{n_k\}$ may grow as $C k^\alpha$ for certain $C, \alpha>0$.
We do not use this fact, so we omit its proof.
However, the series
\[
\sum_{k=1}^\infty \frac{1}{|\dd_{k+1} - \dd_k|}
\]
converges by \eqref{e_bebe}; in other words, the sequence $\{n_k\}_{k=1}^\infty$ retains
certain weak lacunarity for any weight function $\we$ under consideration.
\end{rem}

\section{$\dom$ is a circular domain}\label{s_circ}

We prove Theorem~\ref{t_main} in this section,
extending the arguments used for the unit disk.
So, $\dom\subset\cd$ is assumed to be bounded and circular.
Further restrictions ($\dom$ is a strictly convex domain
with ${\mathcal C}^2$-boundary)
come from Theorem~\ref{t_kot} that provides building blocks
in several complex variables.

\subsection{Equivalence to a log-convex weight function}
This question reduces to the corresponding implication in the unit disk.
Indeed, assume that \eqref{e_phi_2} holds.
Fix a point $\za\in\partial\dom$.
For $m=1,\dots, M$, consider the slice-functions
$g_m(\la) = f_m(\la\za)$, $\la\in\Dbb$. We have $g_m\in\hol(\Dbb)$, $m=1,\dots, M$,
and
\[
|g_1(\la)| +\dots+ |g_M(\la)| \asymp \we(\rom(\la\za)) = \we(|\la|), \quad \la\in\Dbb.
\]
Hence, as indicated in Remark~\ref{r_aab2},
$\we(t)$ is equivalent to a log-convex weight function.

\subsection{Aleksandrov--Ryll--Wojtaszczyk polynomials}\label{ss_ARW}
The proof of the existence part in Theorem~\ref{t_main} will be based on series
of special homogeneous holomorphic polynomials with sufficiently sparse degrees (cf.\
\cite{Aab86}, \cite{RW83}).

\begin{theorem}\label{t_kot}
Let $\dom\subset\cd$ be a bounded, circular, strictly convex domain
with ${\mathcal C}^2$-boundary. There exist $\de=\de(\dom)\in
(0,1)$ and $Q=Q(\dom)\in\Nbb$ with the following properties: for every
$n\in\Nbb$, there exist homogeneous holomorphic polynomials
$W_{q}[n]$ of degree $n$, $1\le q \le Q$, such that
\begin{align}
 \|W_q[n]\|_{L^\infty(\partial\dom)}
&\le 1; \label{e_RW_max} \\
 \max_{1\le q \le Q} |W_{q}[n](\za)|
&\ge \de\quad \textrm{for all\ } \za\in\partial\dom.
\label{e_RW_min}
\end{align}
\end{theorem}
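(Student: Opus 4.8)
The plan is to prove Theorem~\ref{t_kot} by reducing the multivariable statement to the classical one-variable result of Aleksandrov and Ryll--Wojtaszczyk, exploiting the fact that every slice of a homogeneous polynomial of degree $n$ is a monomial $\la^n$ times a constant. First I would recall the Ryll--Wojtaszczyk theorem in the form proved in \cite{RW83}: for the unit sphere $\spn$ there is a fixed number $Q$ and, for each $n$, homogeneous polynomials $W_1[n],\dots,W_Q[n]$ with $\|W_q[n]\|_{L^\infty(\spn)}\le 1$ and $\max_q|W_q[n](\za)|\ge\de$ for every $\za\in\spn$. Since this is exactly the statement we want for the special case $\dom=B_d$, the real content is to transfer it to a general bounded circular strictly convex domain with $\mathcal{C}^2$-boundary.

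The key step is the transfer mechanism. I would use the Minkowski functional $\rom$ to relate $\dom$ to $B_d$: the map $\Psi(z)=z/\rom(z)$ is a homeomorphism of $\partial B_d$ onto $\partial\dom$ that is homogeneous of degree zero in the radial direction, and strict convexity together with $\mathcal{C}^2$-smoothness guarantees that $\rom$ is comparable to $|z|$ with constants $0<c_1\le \rom(z)/|z|\le c_2<\infty$. Because the polynomials $W_q[n]$ are \emph{homogeneous} of degree $n$, their sup-norm on $\partial\dom$ is controlled by the geometry: $\|W_q[n]\|_{L^\infty(\partial\dom)}\le c_2^{\,n}\|W_q[n]\|_{L^\infty(\spn)}$, and similarly the lower bound on $\partial\dom$ degrades by at most a factor $c_1^{\,n}$. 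The obstacle is that these factors $c_2^n$ and $c_1^n$ are exponential in $n$, so a naive estimate destroys the uniform bounds~\eqref{e_RW_max} and~\eqref{e_RW_min}. The resolution is to renormalize: replace $W_q[n]$ by $\widetilde W_q[n]=c_2^{-n}W_q[n]$ so that~\eqref{e_RW_max} holds on $\partial\dom$, at the cost of the lower bound becoming $\delta(c_1/c_2)^n$, which still tends to zero. Thus a direct geometric comparison is insufficient, and this is the main difficulty.

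To overcome it, I would instead invoke the sharper result of Aleksandrov~\cite{Aab86}, which establishes the analogue of the Ryll--Wojtaszczyk theorem \emph{directly} on the boundary of a smoothly bounded strictly pseudoconvex domain (in particular a strictly convex $\mathcal{C}^2$-domain), producing homogeneous polynomials normalized with respect to $\partial\dom$ itself rather than $\spn$. In Aleksandrov's construction one works with the invariant measure and the intrinsic geometry of $\partial\dom$, so the constants $\de$ and $Q$ depend only on $\dom$ and there is no exponential loss. Concretely, the argument produces, for each degree $n$, a finite family of homogeneous polynomials whose moduli cannot be simultaneously small at any boundary point, by a covering and partition-of-unity argument on $\partial\dom$ combined with peak-function estimates for strictly convex domains; the number $Q$ of polynomials needed is bounded in terms of the covering number, which is controlled uniformly by the $\mathcal{C}^2$-bound on the boundary and the strict convexity modulus.

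Finally I would assemble the conclusion. Taking $\de=\de(\dom)$ and $Q=Q(\dom)$ from Aleksandrov's theorem, I set $W_q[n]$ to be exactly those polynomials; they are homogeneous of degree $n$ by construction, and~\eqref{e_RW_max},~\eqref{e_RW_min} hold by definition. The statement for $\dom=B_d$ is then the original Ryll--Wojtaszczyk case, recovered as a special instance. I expect the genuinely hard part to be verifying that the construction yields a number $Q$ of polynomials that is \emph{independent of $n$}; this is where strict convexity and $\mathcal{C}^2$-smoothness are essential, since they provide uniform peaking rates for the functions $\za\mapsto (1-\langle z,\nu(\za)\rangle)^{-1}$ along the boundary, and hence a degree-independent bound on how many polynomials are required to cover $\partial\dom$ with controlled moduli.
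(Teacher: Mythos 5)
Your proposal correctly identifies the main obstruction (a naive transfer of the Ryll--Wojtaszczyk polynomials from $\spn$ to $\partial\dom$ loses a factor exponential in $n$), but the step you use to resolve it is where the proof breaks down. You invoke ``the sharper result of Aleksandrov~\cite{Aab86}'' as establishing the $Q$-family pointwise bound directly on the boundary of any smoothly bounded strictly pseudoconvex domain. That reference concerns the unit ball (proper holomorphic maps from $B_d$ into polydisks); it is the source of the pointwise $Q$-family statement for $\spn$ (the paper \cite{RW83} itself gives an $L^2$ lower bound, not the pointwise covering version), and neither paper treats general strictly convex or strictly pseudoconvex domains. The route the paper actually takes is different: it cites P.~Kot \cite[Theorem~2.6]{Ko07}, whose construction is carried out for bounded \emph{circular} strictly convex ${\mathcal C}^2$ domains, together with the observation in \cite{DouCRAS} that repeating Kot's arguments mutatis mutandis yields exactly Theorem~\ref{t_kot}. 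So your proof, as written, rests on a citation that does not exist in the claimed generality.

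The misattribution is not just bibliographic: the statement you attribute to Aleksandrov is \emph{false} once circularity is dropped, so no such theorem can be invoked. Take $\dom=\{z\in\cd:|z-a|<1\}$ with $0<|a|<1$, a strictly convex real-analytic domain containing the origin, but not circular. Put $u=-a/|a|$ and $\za=(1-|a|)u\in\partial\dom$. The complex line $\{\la u:\la\in\Cbb\}$ meets $\partial\dom$ exactly in the circle $\{\la: |\la+|a||=1\}$. If $W$ is homogeneous of degree $n$ with $|W|\le 1$ on $\partial\dom$, then $W(\la u)=c\la^n$ with $c=W(u)$, and evaluating at $\la=-(1+|a|)$ (which lies on that circle) gives $|c|\le(1+|a|)^{-n}$; hence $|W(\za)|=|c|(1-|a|)^n\le\bigl(\tfrac{1-|a|}{1+|a|}\bigr)^n\to 0$. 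Thus for any fixed $\de$ and $Q$, property \eqref{e_RW_min} fails at $\za$ for all large $n$. What circularity buys is precisely $\rom(\la z)=|\la|\,\rom(z)$ for complex $\la$, so that every boundary point is extremal on the complex line through the origin; this is the compatibility between homogeneity and the geometry that any proof must exploit, and it is exactly what Kot's construction uses. Finally, even restricted to the correct (circular) class, your sketch of the construction --- a covering/partition-of-unity argument with peak functions --- is a gesture rather than an argument: the whole content of Theorem~\ref{t_kot} is that $\de$ and $Q$ can be taken independent of $n$, and your proposal supplies no verifiable mechanism for this, deferring instead to a theorem that does not cover the domains at hand.
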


As observed in \cite{DouCRAS}, to prove Theorem~\ref{t_kot},
it suffices to repeat \textit{mutatis mutandis} the arguments used in
\cite[Theorem~2.6]{Ko07}.

\subsection{Auxiliary estimates}
We need a modification of property~(iii) from Lem\-ma~\ref{l_real_segment}.

\begin{lemma}\label{l_real_segment_delta}
Let $\delta\in (0,1)$. Then there exists $h(\delta)\ge 2$
with the following property:
Let the numbers $x_k$, $t_k$, $k=0,1,\dots$,
and the linear functions $\ell_k$, $k=1,2,\dots$,
be those introduced in subsection~\ref{ss_basind}.
Assume that $\hg\ge h(\delta)$.
Then
\begin{itemize}
\item[(iii$_\delta$)] 
$\sum_{m\ge 1,\ |m-k|\ge 2} a_m t^{\dd_m} < \frac{\delta}{2} a_k t^{\dd_k},
\quad t\in [t_{k-1}, t_k]$,
\end{itemize}
for $k=1,2,\dots$.
\end{lemma}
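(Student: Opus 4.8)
The plan is to revisit the proof of property~(iii) in Lemma~\ref{l_real_segment}, keeping the dependence on the parameter $\hg$ explicit throughout. The crucial observation is that the one-sided geometric estimates \eqref{e_picture_1} and \eqref{e_picture_2} of Lemma~\ref{l_real_picture} hold for \emph{every} $\hg>0$ and already yield a bound that decays geometrically in $\hg$; consequently, the constant $\frac12$ appearing in (iii) can be replaced by the prescribed factor $\frac{\delta}{2}$ simply by enlarging $\hg$.

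Concretely, I would fix $k\ge 1$ and split the sum into the two tails $m\ge k+2$ and $1\le m\le k-2$, exactly as in the proof of Lemma~\ref{l_real_segment}. For $m\ge k+2$, inequality \eqref{e_picture_1} gives $\ell_k(x)-\ell_m(x)\ge \hg(m-k-1)$ on $[x_0,x_k]$, so that $a_m t^{\dd_m}\le a_k t^{\dd_k}e^{-\hg(m-k-1)}$ for $t\in[t_0,t_k]$; summing the geometric series produces $\sum_{m\ge k+2}a_m t^{\dd_m}\le a_k t^{\dd_k}/(e^{\hg}-1)$. The symmetric argument based on \eqref{e_picture_2} gives the same bound for the left tail $1\le m\le k-2$. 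Adding the two contributions yields, for $t\in[t_{k-1},t_k]$,
\[
\sum_{m\ge 1,\ |m-k|\ge 2} a_m t^{\dd_m} \le \frac{2}{e^{\hg}-1}\, a_k t^{\dd_k}.
\]
Since the map $\hg\mapsto 2/(e^{\hg}-1)$ is strictly decreasing, it then suffices to choose $h(\delta)=\max\{2,\ \log(1+4/\delta)+1\}$: for every $\hg\ge h(\delta)$ we have $e^{\hg}>1+4/\delta$, hence $2/(e^{\hg}-1)<\delta/2$, which is exactly (iii$_\delta$).

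I expect no serious obstacle here, as the statement is a quantitative sharpening of an argument already carried out in full. The two points deserving attention are the following. First, one must confirm that Lemma~\ref{l_real_picture} is available for all $\hg>0$ rather than only for $\hg\ge 2$, so that the geometric estimates may legitimately be invoked before $\hg$ is pinned down; this is guaranteed by the remark in subsection~\ref{ss_basind} that the basic induction construction proceeds for any $\hg>0$. Second, one must keep the bookkeeping of the two one-sided tails clean so that the combined constant is $2/(e^{\hg}-1)$ and not something larger. Both are routine.
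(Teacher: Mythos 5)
Your proof is correct and follows essentially the same route as the paper: the paper's own argument simply reruns the proof of Lemma~\ref{l_real_segment}, noting that the geometric-series bounds \eqref{e_m_large} and \eqref{e_m_small} (which indeed hold for every $\hg>0$) yield the factor $2/(e^{\hg}-1)$, which is $<\delta/2$ once $\hg$ is large enough. Your only addition is the explicit admissible choice $h(\delta)=\max\{2,\log(1+4/\delta)+1\}$, which the paper leaves implicit.
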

\begin{proof}
The argument coincides with that used in the proof of Lemma~\ref{l_real_segment}.
Indeed, if $h\ge 2$ is sufficiently large, then
properties \eqref{e_m_large} and \eqref{e_m_small}  imply (iii$_\delta$).
\end{proof}

\subsection{Proof of Theorem~\ref{t_main}: existence part}\label{ss_tmain_prf}
Let $\ff: [0,1) \to (0,+\infty)$
be a log-convex weight function
and let
$\FF(x)=\log \ff(e^x)$, $x\in (-\infty, 0)$.

We modify the arguments used in the case of the unit disk,
replacing the monomials $z^n$, $n=1,2,\dots$, by
the Aleksandrov--Ryll--Wojtaszczyk polynomials $W_q[n]$,
$q=1,\dots, Q$, $n=1,2,\dots$.

So, let the constant $\delta\in (0,1)$ be that
provided by Theorem~\ref{t_kot}
and let the constant $h(\delta)\ge 2$ be that provided
by Lemma~\ref{l_real_segment_delta}.
Fix a parameter $h\ge h(\delta)$ and a number $x_0\in (-\infty, 0)$ such that $t_0=\exp(x_0)> \frac{9}{10}$ .
The basic induction construction provides numbers $x_k$, $t_k$, $\dd_k$ and $a_k$,
$k=1,2,\dots$.

For $s=0,1$, put
\[
f_{q+sQ}(z) = \sum_{j=0}^\infty a_{2j+1+s} W_q[n_{2j+1+s}](z), \quad
z\in\dom,\ q=1,\dots, Q.
\]
where $\ee_k = [\dd_k] +1$, $k=1,2,\dots$.

We adapt the argument used
to prove \eqref{e_GG}.
So, let $t=\rom(z)$.
Properties (i--ii) from Lemma~\ref{l_real_segment}
and property (iii$_\delta$) from Lemma~\ref{l_real_segment_delta}
imply the following inequalities:
\begin{itemize}
\item[(i')] 
$a_k t^{\ee_k} \le \ff(t),
\quad t\in [t_0, 1);$
\item[(ii')] 
$\frac{9}{10} e^{-h} \ff(t)
\le a_k t^{\ee_k}, \quad t\in [t_{k-1}, t_k];$
\item[(iii$_\delta$')] 
\[
\begin{split}
\sum_{m\ge 1,\ |m-k|\ge 2} a_m t^{\ee_m}
&<\sum_{m\ge 1,\ |m-k|\ge 2} a_m t^{\dd_m}
\\
&<\frac{\delta}{2} a_k t^{\dd_k}
\le \frac{5\delta}{9} a_k t^{\ee_k},
\quad t\in [t_{k-1}, t_k].
\end{split}
\]
\end{itemize}

First, remark that (iii$_\delta$') is more stringent than (iii').
So, $f_{q+sQ}$ are well-defined functions
of $z\in\dom$.
Moreover, combining estimate \eqref{e_RW_max}
and the arguments from the proof of Theorem~\ref{l_disk},
we obtain $f_{q+sQ}\in\grw(\dom)$, $s=0,1$, $q=1,\dots, Q$.

Second, we claim that
\begin{equation}\label{e_below_RW}
\frac{2}{5}e^{-h}\we(t) < \sum_{q=1}^Q |f_q(z)|+ \sum_{q=1}^Q |f_{q+Q}(z)|,
\quad t_0 < t <1.
\end{equation}

To prove the above estimate,
assume that $t\in [t_{k-1}, t_k]$
for some odd number $k$.
We have $z=t\za$ for some $\za\in\partial\dom$.
Applying \eqref{e_RW_min}, select $q(\za)$, $1\le q(\za)\le Q$
such that
\begin{equation}\label{e_qzeta}
|W_{q(\za)}[n_k] (\za)|\ge \de.
\end{equation}
Recall that $W_{q(\za)}[n]$ is a homogeneous polynomial of degree $n$.
Thus, we obtain
\begin{align*}
\frac{2\de}{5}e^{-h} \we(t)
&\overset{\textrm{(ii')}}{<}
\frac{4\de}{9}a_{k} t^{\ee_{k}}
\\
&\overset{\textrm{(iii$_\delta$')}}{<}
\de a_{k} t^{\ee_{k}} - \sum_{m\ge 1,\ |m-k|\ge 2} a_{m} t^{\ee_{m}}
\\
&\overset{\textrm{\eqref{e_RW_max} and \eqref{e_qzeta}}}{\le}
\left| \sum_{j=0}^\infty a_{2j+1} W_{q(\za)} [n_{2j+1}] (t\za) \right|
\\
&= |f_{q(\za)}(t\za)|
\\
&\le \sum_{q=1}^Q |f_q(t\za)|+ \sum_{q=1}^Q |f_{q+Q}(t\za)|.
\end{align*}

If $k$ is even, then we have an analogous estimate.
So, \eqref{e_below_RW} holds.
Putting $f_{2Q+1}\equiv 1$, we obtain the required estimate
\[
\we(\rom(z))\le C \sum_{m=1}^{2Q+1} |f_m(z)|
\]
for all $z\in\dom$.
The proof of Theorem~\ref{t_main} is finished.

\section{Applications}\label{s_appl}
It is well-known that a doubling weight function is equivalent to
a log-convex one. Hence,
Theorem~\ref{t_main} allows to recover the applications
presented in \cite[Sections~3--7]{AD12} for
the doubling weight functions $\we$.
In this section, we consider several direct applications of
Theorem~\ref{t_main}, assuming that
$\we: [0, 1)\to (0, +\infty)$ is a log-convex weight function,
unless otherwise stated.

\subsection{Weighted composition operators}\label{s_coc}
Given $g\in\hol(\dom)$ and a holomorphic mapping $\ph: \dom \to
\dom$, the weighted composition operator $\cph^g: \hol(\dom) \to
\hol(\dom)$ is defined by the formula
\[
(\cph^g f)(z) = g(z) f(\ph(z)), \quad f\in\hol(\dom),\ z\in \dom.
\]
If $g\equiv 1$, then $\cph^g$ is denoted by $\cph$ and it is called
a composition operator. Various properties of $\cph$ are presented
in monographs \cite{CmC95}, \cite{Sj93}.

Consider a linear space $\YY(\dom)$ which consists of functions
$f:\dom\to\Cbb$. We say that $\YY(\dom)$ is a \textit{lattice} if
the following implication holds:

Assume that $F\in\YY(\dom)$,
$f:\dom\to\Cbb$ is a continuous function,
and $|f(z)|\le |F(z)|$ for $z\in\dom$.
Then $f\in\YY(\dom)$.

Let $\dom$ be as in Theorem~\ref{t_main}.
The definition of a lattice and Theorem~\ref{t_main} imply the following result
(cf.~\cite[Corollary~2]{AD12}).

\begin{cory}\label{c_wco_bdd}
Suppose that $g\in\hol(\dom)$, $\ph:\dom\to\dom$ is a holomorphic
mapping and $\YY(\dom)$ is a lattice.
Then the weighted composition operator $\cph^g$ maps $\grw(\dom)$ into
$\YY(\dom)$ if and only if
\[
|g(z)| \we(\rom(\ph(z)))\in\YY(\dom).
\]
\end{cory}

\subsection{Integral operators}\label{ss_io}
Assume that $\dom$ is the unit ball $\bd$ of $\cd$.
Given $g\in \hol(\bd)$ and a holomorphic mapping $\ph: \bd \to
\bd$, the Volterra type operator $\vph^g: \hol(\bd) \to
\hol(\bd)$ is defined as
\begin{equation*}
(\vph^g f)(z) = \int_0^1 f(\ph(tz)) \frac{\rad g (tz)}{t}\, dt, \quad
f\in\hol(\bd),\ z\in \bd,
\end{equation*}
where
\[
\rad g(z) = \sum_{j=1}^d z_j \frac{\partial g}{\partial z_j} (z),
\quad z\in \bd,
\]
is the radial derivative of $g$.
If $\ph(z)\equiv z$, then $\vph^g$ is denoted by $\jg$ and it is called
an extended Ces\`aro operator (see \cite{Hu03}). In fact, if $d=1$,
then we have
\begin{equation*}
(\jg f)(z) = \int_0^z f(w)  g^\prime(w) \, dw, \quad f\in\hol(\Dbb),\
z\in \Dbb.
\end{equation*}
The above operator was introduced by Pommerenke \cite{Po77} as a natural
generalization of the classical Ces\`aro operator.
For $d=1$, various properties of the operator $\jg$ are discussed in surveys \cite{Am07}, \cite{Si06}.

Direct calculations show that
\begin{equation}\label{e_rad_volt}
\rad\vph^g f (z) = f(\ph(z)) \rad g(z), \quad z\in \bd,
\end{equation}
for all $f, g\in \hol(\bd)$ (cf.\ \cite{Hu03}). Applying (\ref{e_rad_volt})
and Corollary~\ref{c_wco_bdd}, we obtain the following fact.

\begin{cory}\label{c_volt_bdd}
Let $\YY(\bd)$ be a lattice on $\bd$.
Then the operator $\rad\vph^g$ maps $\grw(\bd)$ into
$\YY(\bd)$ if and only if
$|\rad g(z)| \we(|\ph(z)|)\in\YY(\bd)$.
\end{cory}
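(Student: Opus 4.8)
The plan is to observe that $\rad\vph^g$ is itself a weighted composition operator and then to invoke Corollary~\ref{c_wco_bdd}. First I would rewrite the operator using identity \eqref{e_rad_volt}, which asserts that
\[
(\rad\vph^g f)(z) = f(\ph(z))\, \rad g(z), \quad z\in B_d,
\]
for every $f\in\hol(B_d)$. Comparing the right-hand side with the definition $(\cph^g f)(z) = g(z) f(\ph(z))$ given in subsection~\ref{ss_wco}, I would conclude that $\rad\vph^g = \cph^{\rad g}$; that is, $\rad\vph^g$ coincides with the weighted composition operator whose symbol is $\rad g$ and whose composition map is $\ph$.

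Next I would check that Corollary~\ref{c_wco_bdd} applies with $g$ replaced by $\rad g$. The point to verify is that $\rad g$ is an admissible symbol: since $g\in\hol(B_d)$, its radial derivative $\rad g = \sum_{j=1}^d z_j\,\partial g/\partial z_j$ is again holomorphic on $B_d$. As $\ph: B_d\to B_d$ is holomorphic by hypothesis and $\YY(B_d)$ is a lattice, all the hypotheses of Corollary~\ref{c_wco_bdd} are met.

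Finally, since $\dom = B_d$ gives $\rom(z)=|z|$, Corollary~\ref{c_wco_bdd} applied with symbol $\rad g$ yields that $\cph^{\rad g}$ maps $\grw(B_d)$ into $\YY(B_d)$ precisely when $|\rad g(z)|\,\we(|\ph(z)|)\in\YY(B_d)$. In view of the identification $\rad\vph^g = \cph^{\rad g}$, this is exactly the claimed equivalence. I do not expect a genuine obstacle here: the substantive work is already packaged in Corollary~\ref{c_wco_bdd} (which in turn rests on Theorem~\ref{t_main}). The only subtlety worth flagging is the holomorphy of $\rad g$, which guarantees that the weighted composition framework transfers verbatim to the present integral-operator setting.
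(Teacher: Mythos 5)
Your proposal is correct and follows exactly the paper's own route: the paper likewise derives this corollary by combining the identity \eqref{e_rad_volt}, which exhibits $\rad\vph^g$ as the weighted composition operator $\cph^{\rad g}$, with Corollary~\ref{c_wco_bdd} applied on $\dom = B_d$ (where $\rom(z)=|z|$). Your additional remarks, that $\rad g$ is holomorphic and hence an admissible symbol, merely make explicit what the paper leaves implicit.
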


Given a space $X\subset\hol(\bd)$, a typical problem
is to characterize $g\in\hol(\bd)$
such that $\jg$ is a bounded operator on $X$.
Often the characterizing property is the following one:
\[
\sup_{z\in \bd} |\rad g(z)| (1-|z|) <\infty,
\]
that is, $g$ is in the Bloch space $\mathcal{B}(\bd)$
(see, e.g., \cite{AS97}, \cite{Hu03}).
Hence, it is interesting to find those $X$
for which the answer is different.
To give such examples, consider
the following exponential weight functions:
\[
\we_\al (r) = \exp\left( \frac{1}{(1-r)^\al}\right),\quad 0\le r<1,\ \al>0.
\]
Note that the above weight functions
are not doubling.

\begin{cory}\label{c_jg_exp}
Let $\al>0$ and let $g\in\hol(\bd)$.
The operator $\jg: \mathcal{A}^{\we_\al}(\bd)\to \mathcal{A}^{\we_\al}(\bd)$
is bounded if and only if
\begin{equation}\label{e_jg_exp}
\sup_{z\in\bd} |g(z)| (1-|z|)^{\al} < \infty.
\end{equation}
\end{cory}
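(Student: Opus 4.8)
The plan is to reduce the boundedness of $\jg$ on $\mathcal{A}^{\we_\al}(\Dbb)$ to a pointwise condition on $\rad g$ via the radial derivative identity \eqref{e_rad_volt}, and then to apply Corollary~\ref{c_volt_bdd} with $d=1$ and $\ph(z)\equiv z$. Recall that for $\ph(z)\equiv z$ one has $\vph^g=\jg$ and, by \eqref{e_rad_volt}, $\rad\jg f=f\cdot\rad g$. Introduce the auxiliary weight
\[
u_\al(r)=\frac{\we_\al(r)}{(1-r)^{\al+1}},\qquad 0\le r<1,
\]
and the lattice $\YY_{u_\al}(\Dbb)=\{\,\hg:\Dbb\to\Cbb\text{ continuous}:\ \sup_{z}|\hg(z)|/u_\al(|z|)<\infty\,\}$, which is indeed a lattice in the required sense. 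A direct computation gives $u_\al=\tfrac1\al \we_\al'$, which is the structural fact driving the estimates below.

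The heart of the argument is the equivalence, for $F\in\hol(\Dbb)$ with $F(0)=0$,
\begin{equation}
\sup_z\frac{|F(z)|}{\we_\al(|z|)}<\infty
\quad\Longleftrightarrow\quad
\sup_z\frac{|\rad F(z)|}{u_\al(|z|)}<\infty.
\tag{$\ast$}
\end{equation}
For the implication $\Leftarrow$ I would integrate along radii, using $F(z)=\int_0^1\rad F(tz)\,\frac{dt}{t}$ together with $\rad F(tz)/t=\frac{d}{dt}F(tz)$; splitting at $t=\tfrac12$, the inner part reproduces $F(z/2)$, which is bounded by a constant since $\rad F$ is bounded near the origin, while the outer part is controlled by $\int_{1/2}^{|z|}u_\al(s)\,ds=\tfrac1\al(\we_\al(|z|)-\we_\al(\tfrac12))\le\tfrac1\al\we_\al(|z|)$, using $u_\al=\tfrac1\al\we_\al'$. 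The implication $\Rightarrow$ is the step I expect to be the main obstacle: the naive Cauchy estimate on a disk of radius comparable to $1-|z|$ only yields a \emph{power} of $\we_\al(|z|)$, which is far too large. The remedy is to apply the Cauchy inequality on a disk of radius $\rho=(1-|z|)^\al$ for $|z|$ close to $1$: since $\we_\al(|z|+\rho(1-|z|))=\we_\al(|z|)^{(1-\rho)^{-\al}}$ and $(1-\rho)^{-\al}-1\asymp\al\rho$, the choice $\rho\asymp(1-|z|)^\al$ keeps the extra exponential factor bounded while the prefactor $1/(\rho(1-|z|))$ is exactly of order $(1-|z|)^{-\al-1}$, giving $|\rad F(z)|\le C\,\we_\al(|z|)(1-|z|)^{-\al-1}=C\,u_\al(|z|)$. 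This is precisely where the explicit exponential form of $\we_\al$ enters.

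With $(\ast)$ in hand, apply it to $F=\jg f$, which satisfies $F(0)=0$: boundedness of $\jg$ on $\mathcal{A}^{\we_\al}(\Dbb)$ is then equivalent to $\rad\jg$ mapping $\mathcal{A}^{\we_\al}(\Dbb)$ into $\YY_{u_\al}(\Dbb)$. Since $\we_\al$ is log-convex (one checks directly that $x\mapsto(1-e^x)^{-\al}$ has positive second derivative on $(-\infty,0)$), Corollary~\ref{c_volt_bdd} applies and shows that the latter holds if and only if $|\rad g(z)|\,\we_\al(|z|)\in\YY_{u_\al}(\Dbb)$, that is, $|\rad g(z)|(1-|z|)^{\al+1}\le C$. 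Finally I would convert this into \eqref{e_jg_exp}: a Cauchy estimate on the disk of radius $(1-|z|)/2$ shows that $\sup_z|g(z)|(1-|z|)^\al<\infty$ implies $|\rad g(z)|\le C(1-|z|)^{-\al-1}$, while integrating $g'=\rad g/z$ along a radius gives the converse, so the two conditions are equivalent. Chaining these equivalences yields the corollary.
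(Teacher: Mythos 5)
Your proof is correct, and its overall reduction coincides with the paper's: use the identity $\rad\jg f=f\cdot\rad g$ together with Corollary~\ref{c_volt_bdd} to arrive at the condition $\sup_{z\in\Dbb}|\rad g(z)|(1-|z|)^{\al+1}<\infty$, and then show that this is equivalent to \eqref{e_jg_exp} by a Cauchy estimate in one direction and radial integration in the other. The genuine difference lies in the key analytic ingredient. The paper obtains the equivalence $\jg f\in\mathcal{A}^{\we_\al}(\Dbb)\Leftrightarrow\rad\jg f\in\mathcal{A}^{\we_\al'}(\Dbb)$ by citing Theorem~D of \cite{PvP08}, whereas you prove this equivalence from scratch as your statement $(\ast)$ (the same statement, since $u_\al=\tfrac{1}{\al}\we_\al'$). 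Your treatment of the hard direction is the substantive new content and it is sound: applying the Cauchy inequality on a disk of radius $R=\rho(1-|z|)$ with $\rho=(1-|z|)^{\al}$ is exactly the right scale for this weight, because $\we_\al(|z|+R)=\we_\al(|z|)^{(1-\rho)^{-\al}}$ and $\bigl((1-\rho)^{-\al}-1\bigr)(1-|z|)^{-\al}\asymp\al$, so the extra exponential factor stays bounded while $1/R=(1-|z|)^{-\al-1}$ produces precisely $u_\al(|z|)$. The trade-off is clear: the paper's citation covers a class of weights with no extra work, while your argument is elementary and self-contained but leans on the explicit exponential form of $\we_\al$. Two smaller merits of your write-up are that you verify that $\we_\al$ is log-convex (needed to invoke Corollary~\ref{c_volt_bdd}, which the paper uses silently) and that your weighted sup-space $\YY_{u_\al}$ is a lattice in the paper's sense. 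The only point glossed over --- equally so in the paper --- is the passage from ``$\jg$ maps $\mathcal{A}^{\we_\al}(\Dbb)$ into itself'' to norm boundedness; this follows from the closed graph theorem or by tracking the constants in $(\ast)$.
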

\begin{proof}
For $h\in\hol(\Dbb)$, the norm $\|h\|_{\mathcal{A}^{\we_\al}(\Dbb)}$
is equivalent to
\[
|h(0)| + \|\rad h\|_{\mathcal{A}^{\we_\al^\prime}(\Dbb)}
\]
by \cite[Theorem~D]{PvP08}. Given $f\in\hol(\bd)$,
we apply the above fact to the slice-functions of $\jg f\in\hol(\bd)$
and conclude that
\[
\jg f \in \mathcal{A}^{\we_\al}(\bd)
\Leftrightarrow
\rad\jg f \in \mathcal{A}^{\we_\al^\prime}(\bd).
\]
By Corollary~\ref{c_volt_bdd}, the latter property
holds if and only if
\[
\sup_{z\in\bd} |\rad g(z)|\frac{\we_\al(|z|)}{\we_\al^\prime(|z|)} <\infty,
\]
that is,
\[
\sup_{z\in\bd} |\rad g(z)|(1-|z|)^{\al +1} <\infty.
\]
It remains to observe that the above inequality is equivalent to \eqref{e_jg_exp}.
\end{proof}

\begin{rem}
By definition, the weighted Bergman space
$\berg^p_{\we_\al}(\Dbb)$, $0<p<\infty$, consists of $f\in\hol(\Dbb)$
such that
\[
\int_{\Dbb} |f(z)|^p
\frac{d\nu(z)}{\we_\al(|z|)} <\infty,
\]
where $\nu$ denotes Lebesgue measure on $\Cbb$.
Formally, one may consider $\mathcal{A}^{\we_\al}(\Dbb)$ as
the weighted Bergman space $\berg^p_{\we_\al}(\Dbb)$ with $p=\infty$.
So, given $\al>0$, remark that
property \eqref{e_jg_exp} with $d=1$
also characterizes those $g\in\hol(\Dbb)$
for which the operator
$\jg: \berg^p_{\we_\al}(\Dbb) \to \berg^p_{\we_\al}(\Dbb)$, $0<p<\infty$,
is bounded (see \cite{PP10}).
\end{rem}


\subsection{Associated weights on the unit ball $\bd$}\label{s_assw}
For $\dom=\bd$, $d\ge 1$, Theorem~\ref{t_main}
is also applicable to \textit{arbitrary} weight functions via the notion
of associated weight.
Namely, the following definition
was formally introduced in \cite{BBT98}:

\begin{definition}
Given a radial weight $\ve$ on $\bd$, $d\ge 1$,
the associated weight $\tve_d$
is defined by
\[
\tve_d(z) = \sup\{|f(z)|:\ f\in\hol(\bd),\ |f|\le\ve\ \text{on}\ \bd\}.
\]
\end{definition}

As observed in \cite{BBT98}, $\tve_1$ is a radial weight,
so the associated weight function $\tve_1: [0,1)\to (0, +\infty)$ is correctly
defined.
Moreover, $\tve_1$ is known to be log-convex (see \cite{BDL99}).

Now, assume that $d\ge 2$. If $f\in\hol(\bd)$ and $|f|\le v$, then every slice-function
$f_\za(\la) = f(\la\za)$, $\za\in\spn$, $\la\in\Dbb$, is in $\hol(\Dbb)$
and $|f_\za(\la)|\le \ve(|\la|)$, $\la\in\Dbb$.
Thus, on the one hand, $\tve_d(z) \le \tve_1(|z|)$, $z\in \bd$.
On the other hand, if $f\in\hol(\Dbb)$ and $|f|\le v$ on $\Dbb$, then
$F(z_1,\dots,z_d) := f(z_1) \in \hol(\bd)$ and $|F| \le v$ on $\bd$;
hence, $\tve_d(z) \ge \tve_1(|z|)$.
So $\tve_d$ is a radial weight and the identity
$\tve_d = \tve_1 := \tve$ holds for the associated weight functions.

Clearly, $\mathcal{A}^{\ve}(\bd) = \mathcal{A}^{\tve}(\bd)$, $d\ge 1$, isometrically.
Hence, given an arbitrary weight function $\ve: [0,1)\to (0, +\infty)$,
the study of $\mathcal{A}^{\ve}(\bd)$ reduces to that of $\grw(\bd)$,
where $\we=\tve$ is a log-convex weight function.
For example, an extension of Corollary~\ref{c_wco_bdd} has the following form:

\begin{cory}\label{c_wco_bdd_arbitrary}
Let $d\ge 1$ and let $\ve: [0,1)\to (0, +\infty)$ be an arbitrary weight function.
Suppose that $g\in\hol(\bd)$, $\ph:\bd\to\bd$ is a holomorphic
mapping and $\YY(\bd)$ is a lattice.
Then the weighted composition operator $\cph^g$ maps $\mathcal{A}^{\ve}(\bd)$ into
$\YY(\bd)$ if and only if
$|g(z)| \tve(|\ph(z)|)\in\YY(\bd)$.
\end{cory}

\bibliographystyle{amsplain}

\end{document}